\newtheorem{thm}{Theorem}[section]
\newtheorem{theorem}[thm]{Theorem}
\newtheorem{cor}[thm]{Corollary}
\newtheorem{lemma}[thm]{Lemma}
\theoremstyle{remark}
\theoremstyle{definition}
\newtheorem{definition}[thm]{Definition}
\newcommand{\anc}{\operatorname{anc}}
\newcommand{\des}{\operatorname{des}}
\newcommand\peg[3]{\xymatrix{#1 \ar@/^2mm/[r]_{#2}& #3}}
\begin{document}

%
%

\title{Pegging Numbers for Various Tree Graphs}
\author{Ariel Levavi}
\date{}
\maketitle

\begin{abstract}
	In the game of pegging, each vertex of a graph is considered a hole into which a peg can be placed. 
	A pegging move is performed by jumping one peg over another peg, and then removing the peg that has been jumped over from the graph. 
	We define the pegging number as the smallest number of pegs needed to reach all the vertices in a graph no matter what the distribution. 
	Similarly, the optimal-pegging number of a graph is defined as the smallest distribution of pegs for which all the vertices in the graph can be reached. 
	We obtain tight bounds on the pegging numbers and optimal-pegging numbers of complete binary trees and compute the optimal-pegging numbers of complete infinitary trees. 
	As a result of these computations, we  deduce that there is a tree whose optimal-pegging number is strictly increased by removing a leaf.  
	We also compute the optimal-pegging number of caterpillar graphs and the tightest upper bound on the optimal-pegging numbers of lobster graphs.
\end{abstract}


\section{Introduction}\label{intro}

	One of the better known peg solitaire games is described by Berlekamp, Conway, and Guy in their book, \textit{Winning Ways for Your Mathematical Plays} \cite{gridGame}. 
	In this game we are presented with an infinite square grid on a Cartesian plane. 
	At each intersection there is a hole in which a peg can be placed, and all the holes in the lower half-plane are filled. 
	We can move pegs on the grid in the following manner: if we have two adjacent pegs, one of which is adjacent to an empty hole, then the peg further from the hole can jump over the peg next to the hole and fill it. 
	The peg that was jumped over is then removed from the grid. 
	As it turns out, the farthest up the grid we can get using only legal pegging moves is a distance of 4 holes.

	A graph version of this game, where in place of a grid we have any graph where each vertex represents a hole that can be filled by a peg, was studied by Helleloid, Khalid, Matchett-Wood, and Moulton in \cite{Moulton} and later by Wood in \cite{Wood}. 
	This game is a variation of the graph game \textit{pebbling} \cite{pebbling}, where each move removes 2 pebbles from a vertex and places one of the pebbles on an adjacent vertex. 
	A pegging move on a graph consists of removing two pegs from adjacent vertices and placing one peg on a unfilled vertex adjacent to one of the first two holes. Given some initial distribution of pegs on a graph, we say we can \textit{reach} a vertex if after a (possibly empty) series of pegging moves we can cover it with a peg. 
	Research on pegging mainly involves finding the \textit{optimal-pegging number}, the size of the smallest distribution which can reach all the vertices in a graph, and finding the \textit{pegging number}, the smallest number of pegs needed to reach all the vertices in a graph no matter what the distribution. 
	These terms are modeled after the definitions of \textit{optimal-pebbling numbers} and \textit{pebbling numbers}, respectively. 
	In their paper, Helleloid et al. study the pegging and optimal-pegging numbers of various types of graphs including paths, cycles, hypercubes, and complete graphs. 
	They also explore basic pegging properties of graphs with diameters of at most 3. 
	Wood examines various graph products and the effects of small variations of distributions of pegs on their reach.

	This paper explores the pegging properties of various types of tree graphs. 
	In Section \ref{background}, we provide background and terminology pertaining to the pegging of general graphs. 
	In Section \ref{genTrees} we present pegging properties of general trees. 
	In Section \ref{binaryTrees}, we find tight bounds on the pegging and optimal-pegging numbers of the complete binary tree.
	In Section \ref{infTrees}, we define the concept of an infinitary tree, and find the optimal-pegging number of this tree, which follows the Fibonacci sequence. 
	As a result of the computations of the optimal-pegging numbers for these two classes of trees, we deduce that there is a tree whose optimal-pegging number is strictly  increased by removing a leaf. 
	In Section \ref{caterpillars}, we compute the optimal-pegging numbers of caterpillar graphs, as well as the lowest upper bound on the optimal-pegging numbers of lobster graphs. 
	Finally, in Section \ref{conclusion}, we discuss further research possibilities.

\section{Background and Terminology}\label{background}

	Although this paper only discusses trees, we will describe the basics of pegging for general graphs. 
	Let $G$ be a graph, and imagine that several of its vertices are filled with one peg each. 
	Then we consider this set of filled vertices $D$ to be a \textit{distribution of pegs} on $G$.
	Given distributions $D$ and $D'$ on $G$, we say that $m = \peg{u}{v}{w}$ is a \textit{pegging move} from $D$ to $D'$ if there exist distinct vertices $u, v \in D$ and $w \notin D$ with both $u$ and $w$ adjacent to $v$ and with $D' = (D \backslash \{u, v\}) \cup \{w\}$. 
	Informally, $m$ describes the act of the peg on $u$ jumping over the peg on $v$ and landing on $w$, as shown in Figure 1 below. 
	
\begin{figure}[h]
	\centering
		\includegraphics[scale=0.5]{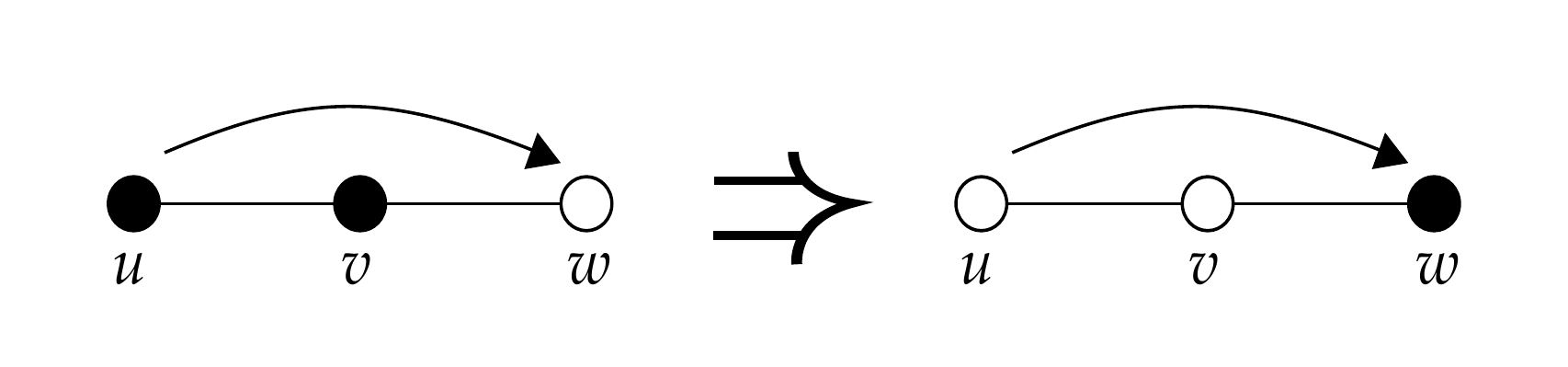}
	\caption{Illustration of the pegging move $m$.}
\end{figure}

	In this case we write $D' = m(D)$. 
	If $D'$ is achieved after performing a finite sequence of pegging moves $M=(m_1,m_2,\dots, m_l)$ on $D$, we write $D'=M(D)$.
	We denote the sequence of the first $i$ moves in $M$ by $M_i$ for $1\leq i\leq l$. 
	We say that $v$ is \textit{reachable} (or can be \textit{pegged}) from a distribution $D$ if there exists a finite sequence of pegging moves $M$ with $v \in M(D)$. 
	The \textit{reach} $R_G(D)$ of a distribution $D$ on $G$ is the set of all vertices reachable from $D$. 
	We say that $G$ can be \textit{pegged} by $D$ if $R_G(D) = V(G)$. 
	If the graph $G$ is clear from the context, we will denote the reach of $D$ as simply $R(D)$.

We will now introduce our main definitions.

\begin{definition}\label{defPegging}
	The \textit{pegging number} $P(G)$ of a graph $G$ is the smallest natural number such that $G$ is peggable by every distribution of size $P(G)$. 
	The \textit{optimal-pegging number} $p(G)$ of $G$ is the minimum size of all distributions on $G$ that can peg $G$.
\end{definition}

	To illustrate these concepts, consider $S_n$, the star graph on $n$ vertices (see Figure 2). 
	Suppose our distribution is $D=\{v\mid v\mbox{ is a leaf}\}$. 
	Since no two vertices in $D$ are adjacent, it is impossible to perform any pegging moves. 
	Therefore, the center vertex cannot be reached. 
	So there exists a distribution $D$ of $n-1$ pegs on $S_n$ such that $R(D)\neq V(S_n)$, which implies that $P(S_n)=n$.

	Now suppose $D$ consists of only two pegs with one placed on the center vertex and one placed on one of the leaves. 
	Since all of the leaves in  $S_n$ are adjacent to the center vertex, they can all be reached. 
	So $p(S_n)\leq 2$. 
	Since we can't perform any pegging moves if $|D|=1$, $p(S_n)=2$.

\begin{figure}[h]
	\centering
		\includegraphics[scale=0.4]{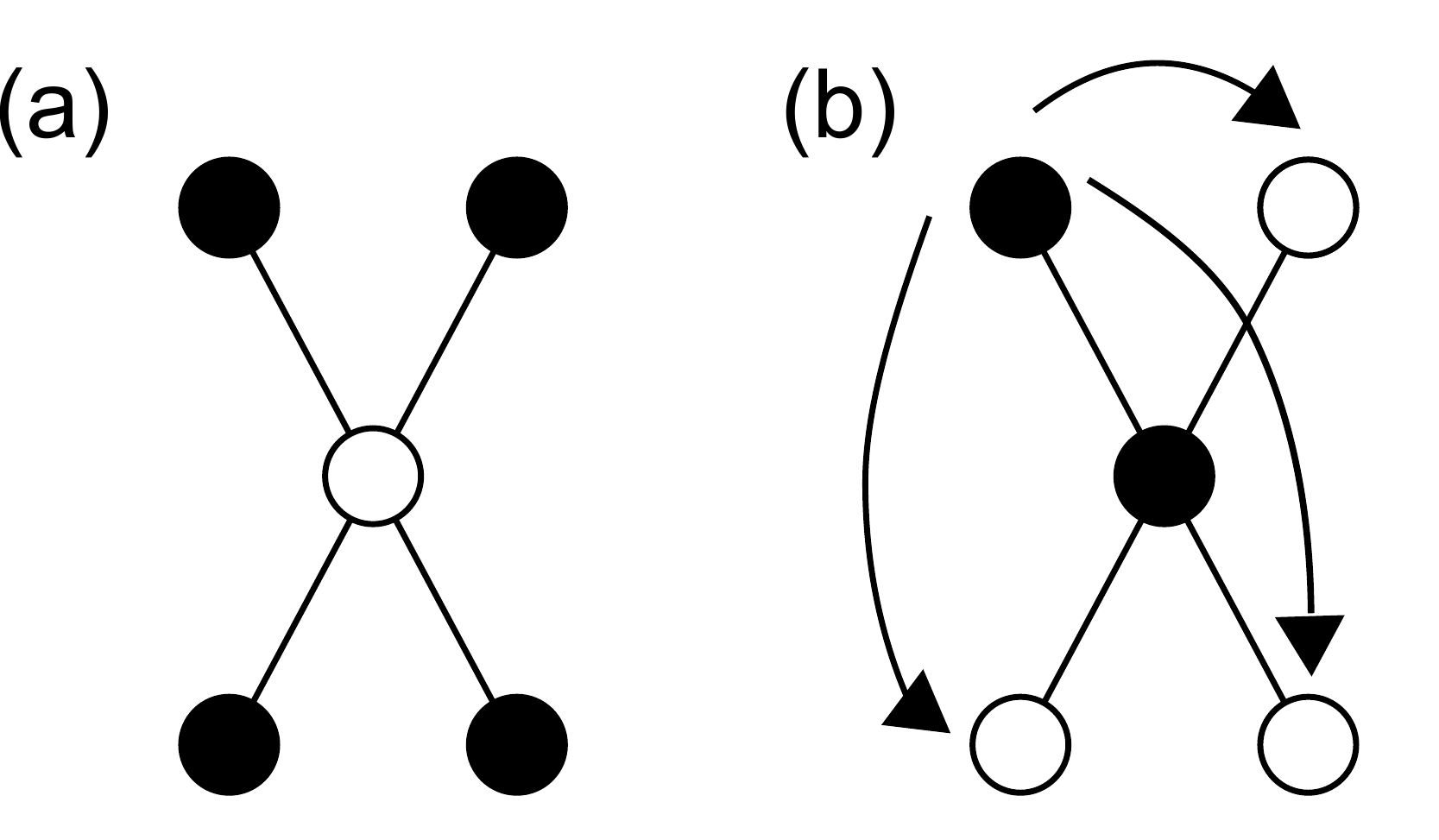}
	\caption{Illustration of the pegging and optimal pegging numbers of $S_5$. Graph (a) shows that four pegs do not necessarily reach all the vertices of $S_5$. However, the two well placed pegs on graph (b) achieves the goal. }
\end{figure}

	The next set of definitions is useful for calculating the pegging and the optimal-pegging numbers of a graph.

\begin{definition}\label{defWeight}
	Let $t\in V(G)$. 
	The \textit{weight of a vertex} $v$ \textit{with respect to} $t$ is $\omega^{d(v,t)}$, where $\omega = (\sqrt{5}-1)/2$ and $d(v,t)$ is the distance between vertices $v$ and $t$. 
	The \textit{weight of a distribution} $D$ \textit{with respect to} $t$ is 
	
$$
w_t(D) = \sum_{v\in D} \omega^{d(v,t)},
$$

	The \textit{summed weight of a vertex} $v$ \textit{with respect to} $L\subseteq V(G)$ is

$$
w_{L}(v) = \sum_{t\in L} w_t(v).
$$

	Similarly for $D\subseteq V(G)$, $w_{L}(D) = \sum_{t\in L} w_t(D)$.
\end{definition}

	Notice that $\omega$ defined in this way is a positive solution of the $x^2+x=1$. 
	Weights satisfy the first of the following two monotonicity lemmas. 
	The second monotonicity lemma establishes that removing pegs from a distribution cannot help us reach vertices that we otherwise could not reach with their presence.

\begin{lemma}[Monotonicity of Weight]\label{monoWeight}
	Let $D$ be a distribution of pegs on a graph $G$, and let $D'$ be the distribution achieved by performing the finite sequence of pegging moves $M$. 
	Then $w_t(D')\leq w_t(D)$ for all $t\in V(G)$.
\end{lemma}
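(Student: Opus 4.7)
The plan is to reduce the statement to the case of a single pegging move and then analyze that case by a short distance calculation that exploits the defining identity $\omega^2+\omega=1$.

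First I would note that if $M=(m_1,\dots,m_l)$ then $w_t(M(D))=w_t(m_l(M_{l-1}(D)))$, so by induction on $l$ it suffices to show $w_t(m(D))\le w_t(D)$ for a single move $m=\peg{u}{v}{w}$. For such a move, only the three affected vertices contribute to the change, giving
\begin{equation*}
w_t(m(D))-w_t(D)=\omega^{d(w,t)}-\omega^{d(u,t)}-\omega^{d(v,t)}.
\end{equation*}
Thus the lemma reduces to the purely metric inequality $\omega^{d(w,t)}\le \omega^{d(u,t)}+\omega^{d(v,t)}$, which I would prove using only that $u$ is adjacent to $v$ and $w$ is adjacent to $v$.

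Write $a=d(u,t)$, $b=d(v,t)$, $c=d(w,t)$. The triangle inequality applied to the edges $uv$ and $vw$ forces $|a-b|\le 1$ and $|b-c|\le 1$, so in particular $a\ge b-1$ and $c\ge b-1$. If $c\ge b$, then $\omega^c\le\omega^b\le\omega^a+\omega^b$ and we are done. So the remaining case is $c=b-1$, where the inequality becomes $\omega^{b-1}\le\omega^a+\omega^b$. I would handle the three possibilities $a\in\{b-1,b,b+1\}$ separately: for $a=b-1$ the right side dominates trivially; for $a=b$ it reduces to $1\le 2\omega$, which holds since $\omega=(\sqrt5-1)/2>1/2$; and for $a=b+1$ it becomes $1\le \omega+\omega^2$, which is an equality by the defining property of $\omega$. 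This last case is the tight one and explains the choice of the golden ratio conjugate as the weight base.

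The main obstacle, insofar as there is one, is recognizing that no case analysis deeper than the three values of $a-b$ is needed; the potential worry that one might want to compare $u$ and $w$ directly is unnecessary, because the inequality passes through $v$ via the two independent triangle-inequality bounds. Once this is observed, the proof is essentially a one-line application of $\omega^2+\omega=1$, and the inductive extension to an arbitrary sequence $M$ is immediate.
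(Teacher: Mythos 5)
Your proof is correct. The paper does not include its own argument for this lemma (it defers to \cite{Moulton}), but your reduction to a single move followed by the three-case check on $a-b$, with the tight case $a=b+1$, $c=b-1$ handled by $\omega^2+\omega=1$, is exactly the standard proof.
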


\begin{lemma}[Monotonicity of Reach]\label{monoReach}
	Let $D'\subseteq D$ be two distributions on a graph $G$. 
	Then we have $R(D')\subseteq R(D)$.
\end{lemma}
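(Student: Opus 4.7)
The plan is to take any vertex $v \in R(D')$, find a sequence $M = (m_1, \ldots, m_l)$ of pegging moves valid from $D'$ with $v \in M(D')$, and use it to construct a (possibly shorter) sequence $N$ of moves valid from $D$ with $v \in N(D)$. The intuition is that the extra pegs in $D \setminus D'$ should just sit there while we mimic $M$. The subtlety is that a pegging move $\peg{u}{a}{w}$ requires the target $w$ to be \emph{unoccupied}, so an extra peg happening to sit at the landing vertex of some move in $M$ could block a naive simulation.

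I would proceed by induction on $l$, tracking two parallel distributions in lockstep: $D'_i := M_i(D')$ on the $D'$ side and a distribution $D_i$ on the $D$ side, with $D_0 := D$. The invariant to maintain is $D'_i \subseteq D_i$. For the $i$-th move $m_i = \peg{u}{a}{w}$, validity from $D'_{i-1}$ gives $u, a \in D'_{i-1}$ and $w \notin D'_{i-1}$, so by the invariant $u, a \in D_{i-1}$. If $w \notin D_{i-1}$, then $m_i$ is also valid from $D_{i-1}$; perform it to obtain $D_i := m_i(D_{i-1})$, and the invariant is preserved because removing $\{u,a\}$ and adjoining $\{w\}$ on both sides respects inclusion. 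If instead $w \in D_{i-1}$, I would simply skip the move and set $D_i := D_{i-1}$; then $D'_i = (D'_{i-1} \setminus \{u, a\}) \cup \{w\} \subseteq D_{i-1} = D_i$, since both $D'_{i-1} \setminus \{u, a\}$ and $\{w\}$ already lie in $D_{i-1}$.

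After all $l$ steps are processed, the $D$-side has executed a subsequence $N$ of legitimate pegging moves, producing $D_l = N(D)$, and the invariant yields $v \in D'_l \subseteq D_l = N(D)$. Hence $v \in R(D)$, which is exactly what we want. The only step requiring genuine thought is the collision case where $w \in D_{i-1}$: resolving it by dropping the move (rather than attempting to modify it) is the key observation, and the routine verification that this preserves $D'_i \subseteq D_i$ is what makes the induction go through.
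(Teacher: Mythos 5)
Your proof is correct. The paper itself does not include a proof of this lemma---it defers to Helleloid et al.\ \cite{Moulton}---so there is no in-paper argument to compare against, but your simulation argument is the standard one and is complete: the invariant $D'_i \subseteq D_i$ is set up correctly, and you correctly identify and resolve the only real subtlety, namely that a move $\peg{u}{a}{w}$ can be blocked on the $D$-side when $w \in D_{i-1}$, in which case dropping the move preserves the invariant because $w$ is already present. The verification of both cases is routine and your conclusion $v \in D'_l \subseteq D_l = N(D)$ gives $v \in R(D)$ as required.
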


	See \cite{Moulton} for proofs.

	Combining Lemma \ref{monoWeight} with the fact that if $t\in D$, $w_t(D)\geq1$, we get the following theorem:

\begin{theorem}\label{reachThm}
	If $D\subseteq V(G)$ and $t\in R(D)$, then $w_t(D)\geq 1$.
\end{theorem}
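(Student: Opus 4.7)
The plan is to chain together the Monotonicity of Weight lemma with the trivial observation that a peg sitting on the target contributes weight $1$. The key point the hint in the excerpt makes explicit is: ``if $t\in D$, then $w_t(D)\geq 1$.'' So I would carry out the argument in three short steps.

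First, unpack the hypothesis $t\in R(D)$: by definition of reach, there exists a finite sequence $M$ of pegging moves with $t\in M(D)$. Set $D'=M(D)$, so that $t\in D'$.

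Second, estimate $w_t(D')$ directly from the definition. Since every summand $\omega^{d(v,t)}$ in $w_t(D')=\sum_{v\in D'}\omega^{d(v,t)}$ is nonnegative, and since $t\in D'$ contributes the term $\omega^{d(t,t)}=\omega^0=1$, we conclude $w_t(D')\geq 1$. This is exactly the ``hint'' sentence preceding the theorem.

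Third, invoke Lemma \ref{monoWeight} (Monotonicity of Weight) for the sequence $M$: since $D'=M(D)$, we have $w_t(D')\leq w_t(D)$. Chaining this with the previous step gives $w_t(D)\geq w_t(D')\geq 1$, which is the desired inequality.

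There is essentially no obstacle here; the theorem is a two-line corollary of the lemma stated just above it, and the only subtlety is making sure to distinguish the starting distribution $D$ from the terminal distribution $D'$ that actually contains $t$. I would keep the write-up to a few sentences and not belabor the routine verification that $\omega^0=1$.
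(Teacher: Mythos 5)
Your argument is exactly the one the paper intends: the text preceding the theorem says it follows by ``combining Lemma \ref{monoWeight} with the fact that if $t\in D$, $w_t(D)\geq 1$,'' which is precisely your three-step chain through the terminal distribution $D'=M(D)$. The proposal is correct and complete.
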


	The contrapositive of this theorem is extremely useful for calculating lower bounds for pegging and optimal-pegging numbers. 
	Here is another variation of this theorem.

\begin{cor}\label{reachSummedCor}
	If $L\subseteq R_G(D)$, then $w_{L}(D)\geq |L|$.
\end{cor}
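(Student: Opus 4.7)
The plan is to deduce the corollary directly from Theorem \ref{reachThm} by summing the pointwise inequality over the set $L$. By Definition \ref{defWeight}, the summed weight decomposes as $w_L(D) = \sum_{t \in L} w_t(D)$, so it suffices to bound each summand from below by $1$.

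Fix any $t \in L$. Since $L \subseteq R_G(D)$, we have $t \in R_G(D)$, so $t$ is reachable from $D$. Theorem \ref{reachThm} then yields $w_t(D) \geq 1$. Summing these $|L|$ inequalities gives
$$
w_L(D) \;=\; \sum_{t \in L} w_t(D) \;\geq\; \sum_{t \in L} 1 \;=\; |L|,
$$
which is exactly the claim.

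The only substantive step is the appeal to Theorem \ref{reachThm}, and there is no real obstacle here: the corollary is simply the additive form of that theorem over a set of targets. I would state it separately from the theorem only because this aggregated version is the shape in which reachability bounds tend to get used in practice, namely when one wants to argue that a small distribution $D$ cannot peg a large collection $L$ of vertices simultaneously because $w_L(D) < |L|$.
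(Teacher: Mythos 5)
Your proof is correct and follows the same route as the paper's: decompose $w_L(D)$ as $\sum_{t\in L} w_t(D)$ and apply Theorem \ref{reachThm} to each term. Nothing to add.
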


\begin{proof}
	Suppose $L\subseteq R_G(D)$. Then 
$\displaystyle
w_{L}(D)	=	\sum_{t\in L} w_t(D)
\geq 	\sum_{t\in L}1 
= |L|.$

\end{proof}

	Corollary \ref{reachSummedCor} shows that we can obtain a lower bound for the optimal-pegging number of any graph $G$ by choosing $L\subseteq V(G)$ and taking the partial sum $S_k$ of the $k$ largest summed weights of the vertices in $V(G)$ with respect to $L$. 
	We sum until we achieve some $S_k$ where $S_{k-1}<|L|\leq S_k$.

	One interesting variation of pegging involves stacking more than one peg on a single vertex. 
	We define a \textit{multi-distribution of pegs} $D$ on a graph $G$ as a multiset of the elements of $V(G)$. 
	Informally, $D$ is a distribution of pegs on $G$ where each vertex can be covered with more than one peg.
	To distinguish between a multi-distribution and our original definition of a distribution, we will call distributions that only allow one peg per vertex \textit{proper distributions}. 
	A \textit{stacking move} $m=\peg{u}{v}{w}$ from a multi-distribution $D$ is a pegging move without the constraint that $w\notin D$. 
	Notice that all pegging moves are stacking moves, but the converse is not true. 
	Also notice that the definitions of pegging numbers and optimal-pegging numbers require that the initial distributions of pegs be proper. 
	We say that $D$ \textit{supersedes} $D'$ if the presence of peg $p\in D'$ implies that $p\in D$. 
	The set of vertices that are reachable by $D$ via stacking moves is denoted $R_s(D)$.

	Let $D_0$ be a multi-distribution of pegs and let $M=(m_1,m_2,\ldots,m_l)$ be a sequence of stacking moves with $D_i=M_i(D_0)$. 
	Fix a peg $p\in D_l$. Informally, we define the \textit{ancestors} of $p$, denoted $\anc(D_0,M,p)$, as the set of pegs in $D_0$ that contribute to the placement of $p\in D_l$. 
	Formally, we define the ancestor function by induction:

\begin{enumerate}
\item If $M=\emptyset$, then $\anc(D_0,\emptyset, p) = \{p\}$.
\item If $m_i=\peg{u}{v}{p}$, then $\anc(D_0,M_i,p) = \anc(D_0,M_{i-1},u) \cup \anc(D_0,M_{i-1},v)$.
\item If the specific occurrence of $p\in D_i$ is also in $D_{i-1}$, then $\anc(D_0,M_i,p) = \anc(D_0,M_{i-1},p).$
\end{enumerate}

\begin{lemma}\label{ancestorLem}
	Let $D_0$ be a distribution of pegs and let $M=(m_1,\ldots,m_l)$ be a sequence of moves to obtain some distribution $D_l$ containing pegs $x\neq y$. 
	Then $\anc(D_0,M,x)\cap \anc(D_0,M,y)=\emptyset$.
\end{lemma}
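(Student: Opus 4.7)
The plan is to proceed by induction on the length $l$ of the move sequence $M$, tracking the disjointness of ancestor sets as pegs are created and destroyed. Throughout, it is important to remember that we are dealing with \emph{specific occurrences} of pegs rather than vertices: the ancestor function is defined on occurrences, and the statement that $x \neq y$ refers to occurrences. The intuition is that the ancestor function was set up precisely so that each peg in $D_0$ is ``spent'' along exactly one lineage leading to a single descendant in $D_l$, so the lemma should essentially fall out of the recursive definition.

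For the base case $l = 0$, the definition gives $\anc(D_0, \emptyset, x) = \{x\}$ and $\anc(D_0, \emptyset, y) = \{y\}$, and these are disjoint because $x \neq y$. For the inductive step, assume the lemma holds for sequences of length at most $l-1$ and consider a sequence $M$ of length $l$ with last move $m_l = \peg{u}{v}{w}$ creating a new peg $p$ at $w$. Given distinct pegs $x, y \in D_l$, I would split into two cases depending on whether either of them is the newly created peg $p$.

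In the first case, where neither $x$ nor $y$ is $p$, clauses (3) of the definition gives $\anc(D_0, M_l, x) = \anc(D_0, M_{l-1}, x)$ and $\anc(D_0, M_l, y) = \anc(D_0, M_{l-1}, y)$, and since $x, y$ are distinct occurrences already present in $D_{l-1}$, the inductive hypothesis applied to $M_{l-1}$ finishes the case. In the second case, suppose without loss of generality $x = p$; then clause (2) gives $\anc(D_0, M_l, x) = \anc(D_0, M_{l-1}, u) \cup \anc(D_0, M_{l-1}, v)$, while $y$ (being a distinct occurrence from $p$) persists from $D_{l-1}$, so $\anc(D_0, M_l, y) = \anc(D_0, M_{l-1}, y)$. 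The crucial observation is that the specific occurrences of the pegs at $u$ and $v$ consumed by $m_l$ were both removed from $D_{l-1}$, whereas $y$ survives, so $y$ is distinct from each of these occurrences in $D_{l-1}$; applying the inductive hypothesis to the pairs $(u, y)$ and $(v, y)$ shows that both $\anc(D_0, M_{l-1}, u)$ and $\anc(D_0, M_{l-1}, v)$ are disjoint from $\anc(D_0, M_{l-1}, y)$, completing the step.

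The main subtlety, and the one place where care is required, is the bookkeeping of occurrences in a multi-distribution: for instance when the move $m_l$ removes only one peg from a stack at $u$, a different occurrence at the same vertex $u$ may survive and play the role of $y$. Once one is disciplined about treating the ancestor function as operating on occurrences rather than vertices, this issue dissolves, and the induction goes through cleanly.
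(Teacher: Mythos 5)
Your induction is correct, and it takes a genuinely different route from the paper. The paper's proof introduces a dual \emph{descendant} function $\des$, defined by the mirror-image recursion to the one defining $\anc$ (each peg occurrence in $D_0$ is mapped forward to the unique peg in $D_l$ that it contributes to), and then concludes in one line that $\anc(D_0,M,x)$ and $\anc(D_0,M,y)$ are the preimages of $x$ and $y$ under $\des$, hence disjoint because preimages of distinct points under a function are disjoint. That argument is shorter but leaves two things implicit: that $\des$ is actually a well-defined, single-valued function on occurrences, and that its fibers coincide with the $\anc$ sets; both facts would themselves be verified by an induction of exactly the kind you carry out. Your direct induction on the length of $M$ avoids introducing $\des$ altogether and makes the occurrence-level bookkeeping explicit --- in particular the key point that the specific occurrences at $u$ and $v$ consumed by the final move are removed from $D_{l-1}$ and hence distinct from any surviving occurrence $y$, which is exactly where the disjointness comes from. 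The paper's version buys brevity and a clean conceptual picture (ancestor sets are fibers of a map from $D_0$ to $D_l$); yours buys self-containedness, with every step justified directly from the stated recursive definition.
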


\begin{proof}
	Let $M_i=(m_1,\cdots,m_i)$ be the first $i$ moves in $M$ where $1\leq i\leq l$ and denote the distribution we achieve after $M_i$ as $D_i$. 
	Informally, we define the function \textit{des} as the peg in $D_l$ that is contributed to by some peg in $D_0$. 
	Formally, we define $\des(p)=d(D_0,M,p)$ where we define function $d$ by induction:

\begin{enumerate}
\item If peg $p\in D_l$, then $d(D_l,M,p)=p$.
\item If $m_i=\peg{u}{v}{p}$, then $d(D_{i-1},M,u)=d(D_{i-1},M,v)=\des(D_i,M,p)$.
\item If $p\in D_{i-1}\cap D_i$, then $d(D_{i-1},M,p)=d(D_i,M,p)$.
\end{enumerate}

	Since preimages of distinct vertices under \textit{des} are disjoint, the sets $\anc(D_0,M,x)$ and $\anc(D_0,M,y)$ are disjoint.
\end{proof}

	The following lemma provides a property about pegging graph that we will use frequently in the remainder of this paper.

\begin{lemma}\label{pegRoutesLemma}
	Let $u,v\in V(G)$, and let $D$ be a distribution of pegs on $G$. 
	Suppose $M$ is a sequence of pegging moves such that $u\in\anc(D,M,v)$.
	Then for a vertex $w\in D$ there exists a path $P=\{w,v_1,v_2,\dots,v_k,v\}$ such that $u\in P$.
\end{lemma}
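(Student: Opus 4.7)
The plan is to proceed by induction on $l$, the length of $M$. For the base case $l=0$, the definition gives $\anc(D,\emptyset,v)=\{v\}$, so $u=v\in D$; taking $w=v$, the singleton ``path'' $P=\{v\}$ trivially contains $u$.

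For the inductive step, I would let $m_l=\peg{a}{b}{c}$ and split on whether the tracked occurrence of $v$ in $D_l$ is inherited from $D_{l-1}$ or is the peg newly placed by $m_l$. In the inherited case, clause (3) of the ancestor definition gives $\anc(D,M_l,v)=\anc(D,M_{l-1},v)$, so $u\in\anc(D,M_{l-1},v)$ and the inductive hypothesis applied to $M_{l-1}$ supplies the required $w$ and path directly. In the newly-placed case we have $v=c$, and clause (2) gives $\anc(D,M_l,v)=\anc(D,M_{l-1},a)\cup\anc(D,M_{l-1},b)$, so $u$ is an $M_{l-1}$-ancestor of either $a$ or $b$. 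Applying the inductive hypothesis then produces $w\in D$ and a path from $w$ through $u$ to $a$ (respectively, to $b$); since $m_l$ is a legal pegging move, $a,b,c$ form a length-$2$ path in $G$, and appending the two vertices $b,c$ (respectively, the single vertex $c$) extends the existing path into a walk from $w$ through $u$ to $v=c$, as required.

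The main subtlety is the intended reading of ``path'' in the conclusion, since the induction naturally outputs a walk rather than a simple path. Interpreting $P$ as a walk (a sequence of vertices with consecutive pairs adjacent in $G$) closes the induction cleanly as above, and this reading is enough for all downstream uses in the paper. If strict distinctness of vertices is demanded, one must argue that shortcutting any revisited vertices in the concatenation does not delete $u$; alternatively, one can exploit the fact that $\anc(D,M,v)\subseteq D$ always holds and simply take $w=u$, then invoke connectedness of the component of $G$ containing $u$ and $v$ to exhibit a trivial $u$-to-$v$ path containing $u$ as an endpoint. I would present the inductive argument, since it captures the structurally stronger claim that $u$ lies on the actual walk traced out by the descendants of the initial peg at $w$.
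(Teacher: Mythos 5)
Your argument is correct for the statement as literally written, but it takes a genuinely different route from the paper's. The paper gives a short two-case argument: if $u\in D$, it takes $w=u$ and the path along which the peg travels to $v$, so the claim is immediate; if $u\notin D$, it picks some $w\in D$ that is an ancestor of $u$, obtains a route from $w$ to $u$, and concatenates it with the route from $u$ to $v$, citing ``transitivity.'' Your induction on the length of $M$ is essentially a rigorous version of that concatenation, and you are right to flag that it produces a walk rather than a simple path --- the paper silently elides this, and shortcutting a walk at a repeated vertex can indeed delete $u$, so your caveat is a genuine improvement (in the tree applications the shortcut of a $w$--$v$ walk is the unique $w$--$v$ path, but one still has to check that $u$ survives). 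The most important point is the one you raise in passing: under the paper's formal definition, $\anc(D,M,v)\subseteq D$, so the hypothesis forces $u\in D$ and the literal conclusion is nearly vacuous via $w=u$. The paper's proof, however, spends most of its effort on the case $u\notin D$, because that is the case actually invoked in Lemma \ref{subtreePeg}, where $u\in V(T)\setminus V(T')$ while $D\subseteq V(T')$; there ``$u\in\anc(D,M,v)$'' is being read informally as ``some peg lands on $u$ en route to $v$.'' So if you only prove the literal statement by taking $w=u$, you prove something too weak to support the downstream application; you should instead state and prove your ``structurally stronger'' formulation --- that the lineage of the peg arriving at $v$ traces a walk from some $w\in D$ that visits every intermediate landing vertex --- which your induction already delivers and which is what Lemma \ref{subtreePeg} actually needs.
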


\begin{proof}
	Suppose $u\in D$. 
	If we use $u$ to reach $v$ then there must exist a path between the two vertices, and the above statement is trivially true.
	Suppose $u\notin D$. In order for a peg to reach $u$, there must be a vertex $w\in D$ that is an ancestor of $u$, which implies that there is a path between $w$ and $u$.
	So by transitivity, there is a path from $w$ to $v$ containing $u$.
\end{proof}

	The following lemma shows that stacking pegs does not extend the reach of a proper distribution.

\begin{lemma}[Stacking Lemma]\label{stacking}
	Let $D$ be a proper distribution, and let $M$ be a sequence of stacking moves leading to a multi-distribution $M(D)$. 
	Then there exists a sequence of pegging moves $M'$ resulting from a reordered subsequence of $M$ such that the proper distribution $M'(D)$ supersedes $M(D)$. 
	Moreover, $R_s(D)=R(D)$.
\end{lemma}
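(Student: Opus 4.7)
The plan is to prove the existence of $M'$ by induction on the number of non-proper moves in $M$ --- that is, stacking moves $\peg{u}{v}{w}$ where $w$ is already occupied at the time of the move. When this number is zero, $M$ is itself a proper pegging sequence and we take $M' = M$; here $M'(D) = M(D)$ supersedes itself.

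For the inductive step, let $m_i = \peg{u}{v}{w}$ be the earliest non-proper move, and let $p$ denote the ``extra'' peg that it places at $w$. Using the descendant function $\des$ from the proof of Lemma \ref{ancestorLem}, the peg $p$ has a unique descendant $p' \in M(D)$, sitting at some vertex $v'$. Write $A = \anc(D, M, p')$ for its ancestor set in $D$, and let $M^{p'} \subseteq M$ be the set of moves whose outputs are proper ancestors of $p'$ (so that $m_i \in M^{p'}$). By Lemma \ref{ancestorLem}, the set $A$ is disjoint from the ancestor sets of every other peg in $M(D)$; consequently the complementary subsequence $M^\ast := M \setminus M^{p'}$ consists only of moves whose source pegs descend from $D \setminus A$. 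Thus $M^\ast$ is a legal stacking sequence from $D$ that leaves the pegs of $A$ untouched, and the support of $M^\ast(D)$ equals the support of $M(D) \setminus \{p'\}$ together with the vertices carrying pegs of $A$.

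I split into two sub-cases. If the vertex $v'$ coincides with the vertex of some peg in $A$, then the support of $M^\ast(D)$ already contains the support of $M(D)$; since $M^\ast$ has strictly fewer non-proper moves than $M$ (as $m_i$ was removed), the induction hypothesis applied to $M^\ast$ yields the desired $M'$. Otherwise, I apply the induction hypothesis to the strictly shorter sequence $M^{p'}$ starting from the proper subdistribution $A \subseteq D$, obtaining a proper pegging sequence $\tilde{M}$ with $v' \in \tilde{M}(A)$. Because $A$ and $D \setminus A$ are disjoint and Lemma \ref{ancestorLem} isolates their descendant threads, the concatenation of $M^\ast$ and $\tilde{M}$ forms a valid proper pegging sequence from $D$ whose output supersedes $M(D)$.

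The main obstacle is the bookkeeping in the inductive step --- verifying that the decomposition $M = M^\ast \sqcup M^{p'}$ is clean enough that both halves remain legal in isolation and do not interfere when concatenated. Lemma \ref{ancestorLem} is precisely the tool that makes this work: disjoint ancestor sets guarantee that no move in $M^\ast$ ever consumes a peg descended from $A$, and vice versa. Once this is established, the induction closes. Finally, the equality $R_s(D) = R(D)$ follows easily: the inclusion $R(D) \subseteq R_s(D)$ holds because every pegging move is a stacking move, while for the reverse, if $v \in R_s(D)$ then some stacking sequence $M$ has $v \in M(D)$, and the main statement produces a proper pegging sequence $M'$ with $M'(D) \supseteq \mathrm{supp}(M(D)) \ni v$, giving $v \in R(D)$.
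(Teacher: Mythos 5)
The paper does not actually prove this lemma itself --- it defers to Helleloid et al.\ \cite{Moulton} --- so your attempt has to stand on its own. It has the right flavor (isolate the offending move's ancestral thread via Lemma \ref{ancestorLem} and recurse), and your verification that $M^\ast$ remains a legal stacking sequence is sound, but there are two genuine gaps. First, your induction measure is not shown to decrease. Whether a move is ``non-proper'' depends on the state of the board at the moment it is executed. When you delete the thread $M^{p'}$, the pegs of $A$ sit frozen at their original vertices throughout the run of $M^\ast$, whereas in $M$ they would have been consumed and replaced by intermediate thread pegs at other vertices. A move of $M^\ast$ that was proper inside $M$ can therefore become non-proper inside $M^\ast$, because its target vertex is now blocked by a frozen $A$-peg; so $M^\ast$ may well have \emph{more} non-proper moves than $M$. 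Switching to induction on $|M|$ rescues sub-case 1 (where $m_i$ is genuinely removed), but not sub-case 2: there $M^{p'}$ can be all of $M$ (every move can feed into $p'$), so ``the strictly shorter sequence $M^{p'}$'' need not be shorter at all, and the recursion is circular.

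Second, and more seriously, the concatenation step at the end of sub-case 2 is exactly where the content of the lemma lives, and you assert it rather than prove it. Lemma \ref{ancestorLem} gives disjointness of ancestor sets as sets of \emph{pegs}; it says nothing about the \emph{vertices} those pegs pass through. After running $M^\ast$ (which, note, you have not properized --- it is still only a stacking sequence and may contain non-proper moves of its own), a move of $\tilde M$ may target a vertex currently occupied by a surviving peg of $M^\ast(D)$, or by a peg of $A$ that $\tilde M$ happens not to consume; in the original interleaved sequence $M$ that vertex may have been vacant at the corresponding moment. So the concatenation is again merely a stacking sequence, and showing that this does not cost you any reach is precisely the Stacking Lemma over again. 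A repair has to keep the surgery local to the single collision: work with the first move $m_i=\peg{u}{v}{w}$ that actually creates a stack, observe that the peg already sitting at $w$ can play the role of the newly placed peg in everything downstream, and delete only the future thread of one of the two colliding pegs --- at that one vertex the occupancy is known, so no new collisions are introduced and the sequence genuinely shortens.
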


	See \cite{Moulton} for proof.

	Another graph game that is similar to pegging is \textit{pebbling}. 
	A \textit{pebbling move} on a multi-distribution $D$ is one where we remove two pegs from one vertex and place one on an adjacent vertex. 
	A \textit{peggling move} is either a stacking move or a pebbling move. 
	The set of vertices that are reachable by $D$ via peggling moves is denoted $R_b(D)$.

	The next lemma found in \cite{Moulton} shows that pebbling does not increase the reach of a proper distribution either.

\begin{lemma}[Peggling Lemma]\label{pegglingLem}
	Let $D$ be a proper distribution and let $M$ be a sequence of peggling moves leading to a multi-distribution $M(D)$. 
	Then there exists a sequence of pegging moves $M'$ such that the proper distribution $M'(D)$ supersedes $M(D)$. 
	Moreover, $R_b(D)=R(D)$.
\end{lemma}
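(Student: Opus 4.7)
The plan is to eliminate the pebbling moves in $M$ one at a time and then appeal to the Stacking Lemma (Lemma~\ref{stacking}) for what remains. The induction is on the number $k$ of pebbling moves occurring in the peggling sequence $M$. The base case $k=0$ is immediate: $M$ is then a sequence of stacking moves, and Lemma~\ref{stacking} directly yields a pegging sequence $M'$ with $M'(D)$ superseding $M(D)$.

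For the inductive step, let $m_j$ be the first pebbling move in $M$, so that $m_1,\ldots,m_{j-1}$ are stacking moves and the state $D_{j-1}$ they produce carries at least two pegs on the vertex $v$ that $m_j$ pebbles into an adjacent vertex $w$. Since $D$ is proper, the second peg on $v$ in $D_{j-1}$ must have been deposited by some earlier stacking move; choose $m_s=\peg{u'}{v'}{v}$ with $s<j$ to be the latest such move. The key surgery is to replace the pair $(m_s,m_j)$ with a single pegging move $m^\star=\peg{v'}{v}{w}$ inserted at position $s$: the adjacencies $v'\sim v\sim w$ are given, $v'$ carries a peg just before $m_s$ by legality of $m_s$, and $v$ carries a peg just before $m_s$ either because $v\in D$ or because an even earlier stacking already deposited one. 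Crucially, after the swap the peg at $u'$ is preserved rather than consumed, and the state reached after the modified prefix supersedes the state obtained after $m_j$ in the original run. The modified sequence contains one fewer pebbling move than $M$, so the inductive hypothesis finishes the argument.

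The main obstacle is the bookkeeping around the intermediate moves $m_{s+1},\ldots,m_{j-1}$ and the final tail $m_{j+1},\ldots,m_l$: some of those moves may have depended on the extra peg on $v$ whose deposition we have now suppressed, or on $w$ being empty only after time $j$ rather than after time $s$. I would resolve both difficulties by reapplying Lemma~\ref{stacking} locally to absorb any newly-created stacks into subsequent pegging moves, leaning on the fact that we are only required to produce a superdistribution of $M(D)$ rather than reproduce it exactly. The moreover clause $R_b(D)=R(D)$ is then immediate: pegging moves are peggling moves, giving $R(D)\subseteq R_b(D)$, while any $x\in R_b(D)$ lies in $M(D)$ for some peggling sequence $M$ and hence, by the first part, in the support of the pegging-reachable proper distribution $M'(D)$ just constructed.
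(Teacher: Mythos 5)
The paper itself gives no proof of Lemma~\ref{pegglingLem}; it defers to Helleloid et al.\ \cite{Moulton}. So there is nothing in-paper to compare against, and your argument has to stand on its own. Its core construction is sound and is essentially the standard one: locate the first pebbling move $m_j$, which pebbles two pegs off some vertex $v$ onto an adjacent $w$; since $D$ is proper, at least one of those pegs arrived via an earlier stacking move $m_s=\peg{u'}{v'}{v}$, and replacing the pair $(m_s,m_j)$ by the single jump $m^\star=\peg{v'}{v}{w}$ spends one fewer peg (the one on $u'$) while still landing a peg on $w$. The legality checks you give for $m^\star$ are correct, and the induction on the number of pebbling moves, terminating in an appeal to Lemma~\ref{stacking}, is the right skeleton.

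The soft spot is exactly where you flag it, but your proposed repair --- ``reapplying Lemma~\ref{stacking} locally'' --- is not a well-defined operation: that lemma takes a \emph{proper} starting distribution, and the intermediate states here are multi-distributions. Fortunately no such repair is needed; the obstacle dissolves for two concrete reasons. First, because $m_j$ is the \emph{first} pebbling move, every move among $m_{s+1},\dots,m_{j-1}$ is a stacking move, and stacking moves impose no emptiness condition on their target, so depositing the peg on $w$ early (at step $s$ rather than $j$) cannot invalidate any of them. Second, take $m_s$ to be the \emph{last} deposit onto $v$ before step $j$; if $c$ pegs sit on $v$ just before $m_s$ and the intermediate moves consume $a$ pegs from $v$, then the original run's requirement that $v$ still hold two pegs at step $j-1$ forces $c+1-a\ge 2$, i.e.\ $c-1\ge a$, so even after $m^\star$ consumes one peg of $v$ and suppresses the deposit, the remaining $c-1$ pegs suffice for all intermediate consumptions. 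A direct count then shows the state after step $j$ in the modified run agrees with the original except for an extra peg on $u'$, so it supersedes it, the tail of $M$ remains legal from a superdistribution, and the induction closes. With that substitution for your vague step, the proof is complete; the ``moreover'' clause follows exactly as you state.
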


	Since these peg moving methods do not change the reach, they are useful techniques that will come in handy later on.

\section{General Trees}\label{genTrees}

	In the sections to come, we will introduce various types of trees and compute their pegging numbers and optimal-pegging numbers. 
	The goal of this section is to present two results that are true for all trees. 
	The first result is due to M. Wood \cite{Wood}.

\begin{lemma}\label{Melanie}
	Let $T$ be a tree, and let $t$ be a vertex in $T$ that is reachable from some distribution $D$. 
	Then $t$ is reachable from $D$ using only pegging moves toward $t$, i.e. for each move $m=\peg{u}{v}{w},$ $d(w,t)<d(u,t)$.
\end{lemma}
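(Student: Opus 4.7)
The plan is strong induction on $|D|$. For the base case $|D|=1$, necessarily $D=\{t\}$ and the empty sequence works. For the inductive step, assume the lemma for all distributions smaller than $|D|$ and that $t \notin D$. Pick any sequence $M=(m_1,\ldots,m_l)$ with $t \in M(D)$. The last move must place a peg on $t$, so it has the form $m_l = \peg{u}{v}{t}$, and since $T$ is a tree this forces $d(v,t)=1$ and $d(u,t)=2$; hence $m_l$ is itself toward $t$.

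Let $A_u = \anc(D,M_{l-1},u)$ and $A_v = \anc(D,M_{l-1},v)$. By Lemma~\ref{ancestorLem} they are disjoint. Root $T$ at $t$ and let $S_v$ denote the component of $T \setminus \{t\}$ containing $v$. A key observation is that during $M_{l-1}$ the vertex $t$ is empty, so no move in $M_{l-1}$ can use the edge $\{t,v\}$; hence every peg that appears in $M_{l-1}$ lies in $S_v$. In particular $A_u,A_v \subseteq S_v$, and since both are non-empty, $|A_u|,|A_v|<|D|$, so the inductive hypothesis applies to each.

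Apply IH to $A_v$ with target $v$ to get a sequence $M'_v$ of toward-$v$ moves reaching $v$ from $A_v$. Because $d(x,v)=d(x,t)-1$ for all $x \in S_v$, toward-$v$ and toward-$t$ agree on moves within $S_v$, and a short check shows $M'_v$ cannot exit $S_v$ (an exit would land on $t$, which is not strictly closer to $v$). Thus $M'_v$ is already a toward-$t$ sequence. Similarly, applying IH to $A_u$ with target $u$ yields $M'_u$; a distance analysis within $S_v$ shows that every toward-$u$ move is toward-$t$ except for the single possibility $\peg{u'}{v}{u}$, where $u'$ is a sibling of $u$---a sideways move at $v$. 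Since this move places $u$, if it appears in $M'_u$ it must be the last move (by truncating $M'_u$ at its first reaching of $u$).

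The main obstacle is removing this exceptional sideways move. When it occurs, just before the move the running state (evolved from $A_u$) has pegs on both $u'$ and $v$, so the toward-$t$ move $\peg{u'}{v}{t}$ is legal (as $t$ remains empty throughout); replacing $\peg{u'}{v}{u}$ by $\peg{u'}{v}{t}$ produces a toward-$t$ sequence from $A_u$ already reaching $t$. Otherwise, concatenating $M'_v$ with $M'_u$ and $m_l$ yields a toward-$t$ sequence from $A_u \cup A_v$ reaching $t$; compatibility of the concatenation follows from the disjointness of $A_u$ and $A_v$ together with the fact that both subsequences stay inside $S_v$ and land at vertices controlled by the flow toward their respective targets. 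In either subcase one obtains a toward-$t$ sequence from a subset $A \subseteq D$ reaching $t$; the final step, lifting this to a toward-$t$ sequence from $D$ itself, requires checking that the extra pegs in $D \setminus A$ do not obstruct any landing of the constructed sequence, which is the most delicate part of the bookkeeping and where the ancestor disjointness from Lemma~\ref{ancestorLem} must be invoked most carefully.
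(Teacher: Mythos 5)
Your strategy (strong induction on $|D|$, splitting the last move $\peg{u}{v}{t}$ via the ancestor sets $A_u$, $A_v$ and handling the one genuinely sideways move $\peg{u'}{v}{u}$ by redirecting it to land on $t$) is a genuinely different and far more detailed route than the paper's, which only sketches an argument attributed to Wood: on a tree the path from any peg to $t$ is unique, so any move away from $t$ must later be undone along the same path and can be excised. The redirection of the sideways move is a nice observation and the skeleton is sound. However, the proof as written has a genuine gap, and you say so yourself: the step ``lifting this to a toward-$t$ sequence from $D$ itself'' is precisely where such arguments break, and disjointness of ancestor sets (Lemma~\ref{ancestorLem}) does not repair it. The obstruction is occupancy, not ancestry: a peg of $D\setminus(A_u\cup A_v)$, or a peg of the $A_v$-flow, may be sitting on the landing vertex of a move of $M'_u$, making that pegging move illegal; worse, a move of $M'_u$ whose middle vertex is $v$ would consume the very peg that $m_l=\peg{u}{v}{t}$ needs. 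So both the concatenation subcase and the final lifting are unproven as stated.

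The missing tool is already in the paper: the Stacking Lemma (Lemma~\ref{stacking}). Execute your constructed all-toward-$t$ sequence from the full distribution $D$ as a sequence of \emph{stacking} moves --- these are legal regardless of whether the landing vertex is occupied, and the required pegs on the jumping and middle vertices are present because $D$ supersedes $A_u\cup A_v$. The resulting multi-distribution contains a peg on $t$, and Lemma~\ref{stacking} then yields a \emph{reordered subsequence} of proper pegging moves whose result supersedes it; being a subsequence of a sequence all of whose moves satisfy $d(w,t)<d(u,t)$, it is itself entirely toward $t$, which closes both gaps at once. Two smaller repairs: take $M$ minimal (so that $t$ is genuinely unoccupied throughout $M_{l-1}$, which your redirection move and the claim that ``$t$ remains empty'' both rely on); and replace the assertion that ``every peg that appears in $M_{l-1}$ lies in $S_v$'' --- false, since $D$ may well have pegs in other branches of $T\setminus\{t\}$ --- with the correct statement that no peg can cross the unoccupied vertex $t$, whence $A_u,A_v\subseteq S_v$.
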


	The idea behind the proof is that since $T$ is acyclic there exists only one path between $t$ and $v\in D$. 
	Therefore if we move $v$ \textit{away} from $t$ we will eventually have to move toward $t$ on the same path, and so such a move would unnecessary. 

	The second result in this section pertains to the pegging numbers of subtrees.

\begin{lemma}\label{subtreePeg}
	Let $T$ be a tree and $T'$ a subtree of $T$. 
	Then $P(T')\leq P(T)$.
\end{lemma}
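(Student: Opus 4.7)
The plan is to show that any proper distribution of size $P(T)$ supported on $V(T')$ suffices to peg $T'$. Let $n=P(T)$ and let $D'$ be any proper distribution of $n$ pegs on $T'$. Since $V(T')\subseteq V(T)$, we may view $D'$ as a proper distribution on $T$; by the defining property of $P(T)$, we have $R_T(D')=V(T)$, so in particular every vertex of $V(T')$ is reachable in $T$ from $D'$. The remaining task is to upgrade reachability in $T$ to reachability in $T'$, that is, to exhibit a sequence of moves whose three vertices and two edges all lie inside $T'$.

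The key structural observation is that because $T$ is a tree and $T'$ is a connected subgraph, the unique $T$-path between any two vertices of $V(T')$ coincides with the unique $T'$-path between them. Consequently, for every $u,t\in V(T')$, every vertex and every edge on the $T$-path from $u$ to $t$ lies in $T'$.

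Fix a target $t\in V(T')$. Using Lemma \ref{Melanie}, obtain a sequence $M=(m_1,\dots,m_l)$ of moves in $T$, each directed toward $t$, that reaches $t$ from $D'$. I would prove by induction on $i$ that after $M_i$ the resulting distribution $D_i$ is contained in $V(T')$ and that $m_i$ is a legal move in $T'$. The base case $D_0=D'\subseteq V(T')$ is immediate. For the inductive step, write $m_{i+1}=\peg{u}{v}{w}$ with $u,v\in D_i\subseteq V(T')$ and $d(w,t)<d(u,t)$. Because the move is directed toward $t$, both $v$ and $w$ lie on the $T$-path from $u$ to $t$; by the key observation that path is contained in $T'$, so $v,w\in V(T')$ and the edges $uv,vw$ belong to $E(T')$. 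Hence $m_{i+1}$ is a legal $T'$-move and $D_{i+1}\subseteq V(T')$. Iterating shows $t\in R_{T'}(D')$, so $D'$ pegs $T'$, and therefore $P(T')\le n=P(T)$.

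The one place where the hypotheses are genuinely used, and the only real obstacle in the argument, is the step asserting that a move toward $t$ originating inside $V(T')$ remains inside $V(T')$: this step would fail for arbitrary subgraphs, where geodesics can leave the subgraph, so the argument depends crucially on $T$ being acyclic and $T'$ being a connected subgraph of $T$. Everything else is a routine induction combined with Lemma \ref{Melanie}.
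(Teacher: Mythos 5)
Your proof is correct, and it reaches the conclusion by a somewhat different route than the paper. Both arguments start the same way: place an arbitrary distribution $D'$ of size $P(T)$ on $T'$, note that $R_T(D')=V(T)\supseteq V(T')$, and then argue that the pegging can be carried out without ever leaving $T'$ because $T$ is acyclic. The difference is in how that last step is justified. The paper invokes Lemma \ref{pegRoutesLemma} (ancestors lie on a path from a source peg to the target) and observes that the unique $w$--$v$ path lies in $T'$, so no vertex outside $T'$ can be forced into the pegging; this is brief but somewhat informal, since it never checks that the individual moves $\peg{u}{v}{w}$ have all three vertices and both edges inside $T'$. You instead invoke Lemma \ref{Melanie} to get a move sequence directed toward the target $t$ and run an induction showing every intermediate distribution stays inside $V(T')$: for a move toward $t$ one has $d(u,t)=d(v,t)+1$ and $d(w,t)=d(v,t)-1$, so $v$ and $w$ lie on the unique $u$--$t$ geodesic, which is contained in the connected subtree $T'$. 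This buys a fully explicit verification that each move is legal in $T'$, at the cost of a slightly longer argument. The only (trivial) point you skip that the paper handles is the degenerate case $P(T)>|V(T')|$, where no distribution of size $P(T)$ exists on $T'$ and the bound holds vacuously; it is worth one sentence but does not affect correctness.
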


\begin{proof}
	Let $k=P(T)$. 
	If $k> |V(T')|$, then trivially $P(T')<P(T)$. 
	If $k\leq |V(T')|$, then let $D$ be a distribution of size $k$ on $T'$. 
	By definition of $k$, $R_T(D)=V(T)\supseteq V(T')$. 
	Suppose there exist vertices $u\in V(T)\backslash V(T')$ and $v\in V(T')$ such that we cannot reach $v$ without first reaching $u$. 
	By Lemma \ref{pegRoutesLemma} this can only occur if there exists a path between $v$ and some vertex $w\in D$ that contains $u$. 
	Since $T$ is acyclic, there exists at most one path from $w$ to $v$, which is contained in $T'$.
	So $u$ is not contained is this path, which implies that $R_{T'}(D) = V(T)$.  
	Hence, $P(T')\leq P(T)$.
\end{proof}

	Note that this property of pegging numbers does not hold for general graphs. 
	Nor does the same conclusion hold for optimal-pegging numbers of graphs, trees or otherwise.

\section{Complete Binary Trees}\label{binaryTrees}

	In this section we will find bounds for the pegging number and optimal-pegging number of the complete binary tree. 
	We will denote the complete binary tree of height $h$ as $T_h$. 
	A vertex $v$ is located at \textit{level} $l$ of $T_h$ if $d(v,r) = l$, where $r$ is the root of the tree. 
	By this terminology, the root $r$ is the vertex located at level 0. 
	We will use the notation $v:l$ to signify that vertex $v$ lies at level $l$.

	The following lemma is pivotal for calculating a lower bound for the pegging number of these trees.

\begin{lemma}\label{rootWeightLem}
If $r$ denotes the root of $T_h$, then

$$
w_r(V(T_h)) = \frac{(2 \omega)^{h+1}-1}{2 \omega-1}.
$$

\end{lemma}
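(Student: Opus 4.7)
The plan is to decompose the sum defining $w_r(V(T_h))$ by grouping vertices according to their level, then collapse the result into a geometric series.

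First I would observe that since $T_h$ is the complete binary tree of height $h$, it has exactly $2^l$ vertices at level $l$ for each $0 \leq l \leq h$, and by definition every vertex $v$ at level $l$ satisfies $d(v,r) = l$. Hence each such vertex contributes $\omega^{d(v,r)} = \omega^l$ to the weight sum. Collecting these contributions level by level gives
\begin{equation*}
w_r(V(T_h)) \;=\; \sum_{v \in V(T_h)} \omega^{d(v,r)} \;=\; \sum_{l=0}^{h} 2^l \, \omega^l \;=\; \sum_{l=0}^{h} (2\omega)^l.
\end{equation*}

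Next I would recognize this as a finite geometric series with ratio $2\omega$. Since $\omega = (\sqrt{5}-1)/2 \neq 1/2$, we have $2\omega \neq 1$, so the standard closed form
\begin{equation*}
\sum_{l=0}^{h} (2\omega)^l = \frac{(2\omega)^{h+1}-1}{2\omega-1}
\end{equation*}
applies and yields the claimed identity.

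There is essentially no obstacle here beyond correctly counting vertices by level and invoking the geometric series formula; the lemma is a direct computation that packages a useful expression for later lower-bound arguments on $P(T_h)$ via Theorem \ref{reachThm} and Corollary \ref{reachSummedCor}.
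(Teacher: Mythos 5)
Your proof is correct and follows exactly the same route as the paper's: count the $2^l$ vertices at level $l$, each contributing $\omega^l$, and sum the resulting geometric series $\sum_{l=0}^{h}(2\omega)^l$. The only addition is your explicit check that $2\omega \neq 1$, which the paper leaves implicit.
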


\begin{proof}
	A vertex at level $l$ is a distance of $l$ away from the root. 
	Furthermore, the number of vertices at level $l$ is $2^l$. 
	So
	
\begin{equation*}
w_r(V(T_h)) = \sum\limits_{v\in V(T_h)}w_r(v)
=	\sum\limits_{l=0}^h\sum\limits_{v:l}\omega^l
= \sum\limits_{l=0}^h(2\omega)^l
= \frac{(2\omega)^{h+1} -1}{2\omega -1}.
\end{equation*}

\end{proof}

	Now we will present our bound.

\begin{theorem}\label{pegBinaryThm}
	Let $T_h$ be a complete binary tree of height $h$. 
	Then for sufficiently large $h$ we have
\begin{equation*}
P(T_h)\geq |V(T_h)|-172.
\end{equation*}
\end{theorem}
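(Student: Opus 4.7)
The plan is to exhibit a distribution $D$ on $T_h$ of size $|V(T_h)|-173$ that fails to peg $T_h$; this directly yields $P(T_h)\geq|V(T_h)|-172$. I would pick a leaf $t$ of $T_h$ and construct a set $S\subseteq V(T_h)$ with $|S|=173$, clustered near $t$; then, setting $D=V(T_h)\setminus S$, the contrapositive of Theorem~\ref{reachThm} reduces the entire task to verifying the single weight inequality
\[
w_t(S)>w_t(V(T_h))-1.
\]

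The first computational step is to pin down $w_t(V(T_h))$ when $t$ is a leaf. I would decompose $V(T_h)\setminus\{t\}$ by the least common ancestor with $t$: for each $u=1,\dots,h$, the ``$u$-th stratum'' consists of the $u$-th ancestor $v_{h-u}$ of $t$ together with the complete binary subtree of height $u-1$ rooted at the sibling of $v_{h-u+1}$. A direct count shows this stratum contributes
\[
\omega^u+\omega^{u+1}\cdot\frac{(2\omega)^u-1}{2\omega-1}
\]
to $w_t(V(T_h))$. Summing over $u$ and repeatedly invoking $\omega^2+\omega=1$ (equivalently $1+\omega=1/\omega=\phi$, the golden ratio), a short closed-form computation gives
\[
\lim_{h\to\infty}w_t(V(T_h))=\frac{7+3\sqrt{5}}{2}=\phi^4,
\]
approached monotonically from below as $h$ grows. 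Hence, for $h$ sufficiently large, it suffices to produce $S$ with $w_t(S)>(5+3\sqrt{5})/2$.

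The second step is to choose $S$ so as to maximize $w_t(S)$ under the constraint $|S|=173$. Since the weight $\omega^{d(v,t)}$ depends only on the distance $d(v,t)$, the optimal $S$ consists of the $173$ vertices closest to $t$. Using the count $c(d)=2^{\lfloor d/2\rfloor}$ of vertices at distance $d$ from a leaf (valid for $d\leq 2h-1$) together with the geometric-series formula and the relation $2\omega^2=2-2\omega$, one can write the cumulative weight $W(d)=\sum_{d'=0}^{d} c(d')\,\omega^{d'}$ in closed form. The concrete $S$ is then taken as the ball of an appropriate radius around $t$, augmented by a carefully chosen partial ``boundary shell'' at the next distance so that $|S|$ equals exactly $173$ while $w_t(S)>\phi^4-1$.

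The main obstacle will be the delicate arithmetic needed to pin down the constant $172$ sharply. One must control both the finite-$h$ correction $\phi^4-w_t(V(T_h))$ (to exploit the ``sufficiently large $h$'' qualifier) and the cumulative partial-sum weights $W(d)$ with sufficient precision; this is where the algebraic identities for $\omega$ (notably $\omega^3=2\omega-1$, so that $(2\omega)^{k}$ and $\omega^{k}$ both admit clean Fibonacci-type expressions) allow one to pass from asymptotic estimates to explicit bounds that make the specific size $|S|=173$ work.
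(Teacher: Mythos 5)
Your strategy is the one the paper itself uses---delete a $173$-vertex set $S$ clustered around a leaf $t$ and invoke the contrapositive of Theorem~\ref{reachThm}---and your structural preparation is correct: the number of vertices at distance $d$ from a leaf is indeed $2^{\lfloor d/2\rfloor}$, and $w_t(V(T_h))$ does increase to $\frac{1+\omega}{1-2\omega^2}=\left(\frac{1+\sqrt5}{2}\right)^4\approx 6.8541$ as $h\to\infty$. The fatal problem is the decisive inequality $w_t(S)>\left(\frac{1+\sqrt5}{2}\right)^4-1\approx 5.8541$: it is unattainable with $|S|=173$. The ball of radius $11$ about $t$ contains $\sum_{d=0}^{11}2^{\lfloor d/2\rfloor}=126$ vertices, so the maximum-weight $173$-set is that ball together with $47$ of the $64$ vertices at distance $12$, and its weight is $\sum_{d=0}^{11}2^{\lfloor d/2\rfloor}\omega^{d}+47\omega^{12}\approx 5.4918+0.1460=5.6377$. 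This falls short of $5.8541$ by about $0.216$; equivalently, \emph{every} distribution $D$ with $|D|=|V(T_h)|-173$ satisfies $w_t(D)>1.21$ for every leaf $t$, so Theorem~\ref{reachThm} gives no obstruction at all, and no amount of care in the deferred ``delicate arithmetic'' can rescue the constant. The single-target weight method bottoms out at $|S|=289$: the cumulative weights satisfy $W(13)=\sum_{d=0}^{13}2^{\lfloor d/2\rfloor}\omega^{d}\approx 5.8134$, and one needs $35$ further vertices at distance $14$ before the threshold $5.8541$ is crossed, so the best bound this argument can deliver is $P(T_h)\geq |V(T_h)|-288$.

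For transparency, the published proof of Theorem~\ref{pegBinaryThm} suffers from the same defect, so you should not take the constant $172$ as a reliable target. The paper's unrefined distribution (whose limiting weight is the quoted $0.927<1$) is in fact the complement of the height-$8$ subtree containing $t$, which has $511$ deleted vertices, not $255$; and the ``refinement'' swaps $15$ pegs of total weight $8\omega^{11}+4\omega^{10}+2\omega^{9}+\omega^{8}\approx 0.1203$ for $97$ pegs of total weight $64\omega^{14}+32\omega^{13}+\omega^{12}\approx 0.1404$, a net \emph{increase} of about $0.020$ in $w_t(D')$, so the claimed value $0.99975$ cannot be correct. If you want a clean, fully verifiable statement by your method, prove $P(T_h)\geq |V(T_h)|-288$ (ball of radius $13$ plus $35$ vertices at distance $14$); improving the constant to $172$ would require a genuinely different idea, e.g.\ a multi-target argument via Corollary~\ref{reachSummedCor} or a combinatorial obstruction not captured by the weight function.
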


\begin{proof}
	By Lemma \ref{rootWeightLem}, each subtree of $T_h$ of height $k$, denoted $\widehat{T}_k$, rooted at a vertex $v\in V(T_h)$ has weight $w_v(V(\widehat{T}_k)) = \frac{(2 \omega)^{k+1}-1}{2 \omega-1}$. 
	For each leaf $t\in V(T_h)$ there exists a subtree of height $k$ whose root is located a distance of $k+2$ away from it. We find this tree by following the path of length $k+1$ from $t$ toward the root. 
	Once we are a distance of $k+1$ away from $t$ at some vertex $u$, we go to the child of $u$ that we have not traversed. 
	This child is at level $h-k$ and therefore is the root of a subtree of height $h-k$. 
	Since this subtree is a distance $k+2$ from $t$, its weight with respect to $t$ is

\begin{equation*}
w_t(V(\widehat{T}_k)) = \omega^{k+2}\frac{(2 \omega)^{h+1}-1}{2 \omega-1}.
\end{equation*}

	Let us sum up the weights with respect to $t$ of every subtree of height $8\leq k\leq h-1$ that is a distance of $k+2$ away from $t$. 
	Since $T_h$ is a binary tree, there exists only one such tree of each height $k$ that fits these criteria and all of these trees are disjoint. 
	In addition, let us add the weights with respect to $t$ of each vertex along the path between $t$ and the root which is a distance of $k+1$ from $t$ for $8\leq k\leq h-1$. 
	In other words, we are adding the weights with respect to $t$ of every vertex in $V(T_h)$ and subtracting the subtree of height 7 that contains $t$ as a leaf. The weight of this distribution $D$ is

\begin{align*}
w_t(D) 
= &\sum\limits_{k=0}^{h-1}\left(\omega^{k+2}\frac{(2 \omega)^{k+1}-1}{2 \omega-1}+ \omega^{k+1}	\right)\\
&-\sum\limits_{k=0}^{7}\left(\omega^{k+2}\frac{(2 \omega)^{k+1}-1}{2 \omega-1}+ \omega^{k+1}	\right)\\
\leq &\sum\limits_{k=0}^{h-1}\omega^{k+2}\frac{(2 \omega)^{k+1}-1}{2 \omega-1}+ \sum\limits_{k=0}^{h-1}\omega^{k+1} - 4.9271	\\
= &\frac{1}{2\omega-1}\sum\limits_{k=0}^{h-1}\left(2^{k+1}\omega^{2k+3}- \omega^{k+2}\right)+ \omega\sum\limits_{k=0}^{h-1}\omega^{k} - 4.9271\\
=& \frac{2\omega^3}{2\omega-1}\sum\limits_{k=0}^{h-1}(2 \omega^2)^{k} + \omega\left(1-\frac{\omega}{2\omega -1}	\right)\sum\limits_{k=0}^{h-1}\omega^{k} - 4.9271\\
= &\frac{2\omega^3(2^h\omega^{2h}-1)}{(2\omega-1)(2\omega^2-1)} + \frac{\omega}{\omega-1}\left(1-\frac{\omega}{2\omega -1}	\right)(\omega^h-1) - 4.9271\\
\end{align*}

	As $h\rightarrow\infty$,

\begin{align*}
\lim_{h\rightarrow\infty}w_t(D)
&< \frac{2\omega^3}{(2\omega-1)(1-2\omega^2)} + \frac{\omega}{1-\omega}\left(1-\frac{\omega}{2\omega -1}	\right) - 4.9271\\
&< 8.47213 -2.61803- 4.9271\\
&=0.927<1.
\end{align*}

	So by Theorem \ref{reachThm}, we have $t\notin R_{T_h}(D)$, which implies that $P(T_h)>|D|$. 
	Since $|D|= |V(T_h)|-|V(T_8)|$, for sufficiently large $h$ we have
	
\begin{equation*}
P(T_h)
 \geq |D|+1 
 = |V(T_h)|-|V(\widehat{T}_7)|+1
 = |V(T_h)|-254.
\end{equation*}

\begin{figure}[b]
	\centering
		\includegraphics[scale=0.55]{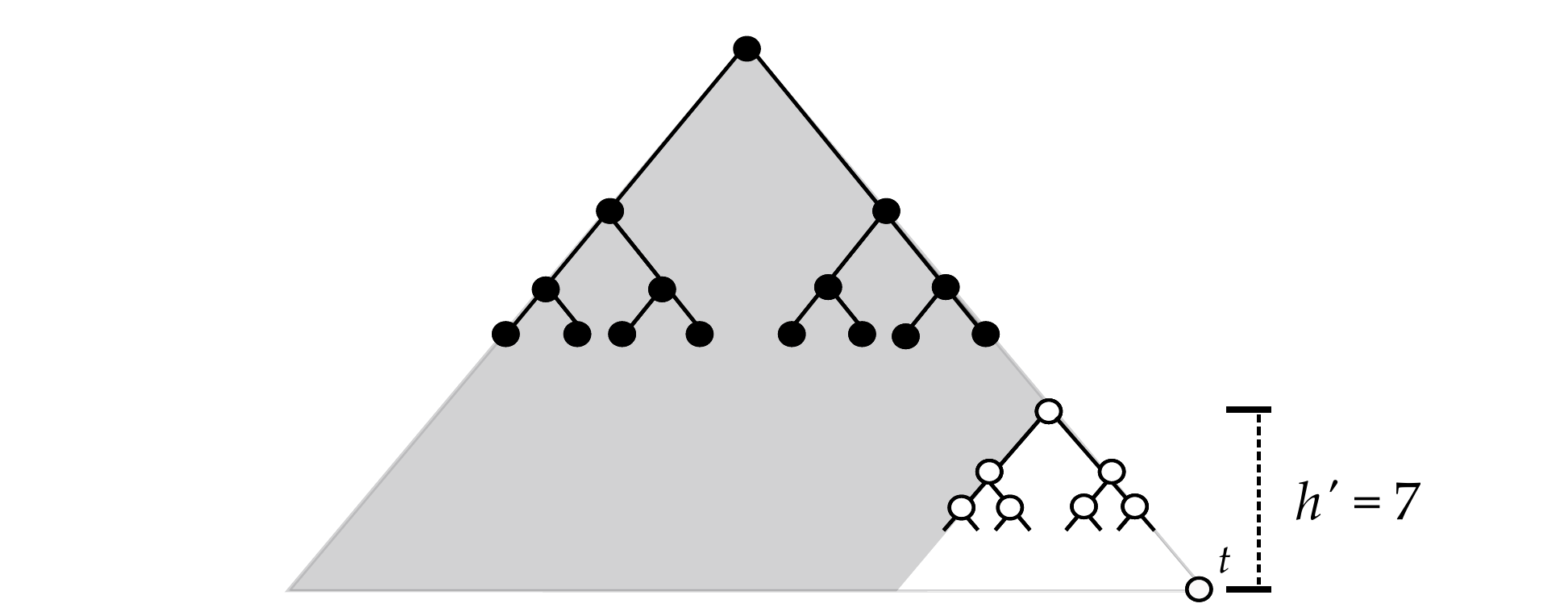}
	\caption{The binary tree $T_h$ with distribution $D$ such that every vertex is covered with a peg except for the vertices in the subtree $\widehat{T}_7$ on the bottom right. By Theorem 4.2 we see that, given this distribution, we cannot reach vertex $t$. Theorem 4.2 further refines this result.}
\end{figure}
	
	Now let us refine our results. 
	When $h\geq 11$, we have exactly 1 pegged vertex a distance of 8 from $t$, 2 pegged vertices a distance of 9 from $t$, 4 pegged vertices a distance of 10 from $t$, and 8 pegged vertices a distance of 11 from $t$ in the distribution $D$.
	In $V(T_h)\backslash D$ there are 64 vertices of distance 12 from $t$, 32 vertices of distance 13 from $t$, and 48 vertices of distance 14 from $t$. 
	If we remove all pegs from vertices of $D$ of distances 8, 9, 10, and 11 from $t$ and add pegs to all the vertices in $V(T_h)\backslash D$ of distances 13 and 14 from $t$ and 1 of the 48 vertices of distance 12 from $t$, we get a distribution $D'$ such that

\begin{align*}
\lim\limits_{h\rightarrow\infty}w_t(D')
&=  \lim\limits_{h\rightarrow\infty}w_t(D) + 64 \omega^{14} + 32\omega^{13} + \omega^{12} - 8\omega^{11} - 4\omega^{10} - 2\omega^{9}-\omega^8\\
&=  0.99975
<  1.
\end{align*}

	So by Theorem \ref{reachThm}, $t\notin R_{T_h}(D')$, which implies that $P(T_h)>|D'|$. 
	Since $|D'|= |V(T_h)|-|V(T_7)|+82$, for sufficiently large $h$ we have
	
\begin{equation*}
P(T_h)
 \geq |D'|+1 
= |V(T_h)|-|V(T_7)|+83
= |V(T_h)|-172.
\end{equation*}
\end{proof}

\begin{cor}\label{pegLimit}
\begin{equation*}
\lim\limits_{h\rightarrow\infty}\frac{P(T_h)}{|V(T_h)|} = 1.
\end{equation*}
\end{cor}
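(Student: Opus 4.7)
The plan is to obtain the limit via the squeeze theorem, sandwiching the ratio $P(T_h)/|V(T_h)|$ between two sequences both tending to $1$.

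For the upper bound, I would observe that $P(T_h) \leq |V(T_h)|$ holds trivially: the unique distribution of size $|V(T_h)|$ places a peg on every vertex, so every vertex is already reached without performing any moves. Hence $P(T_h)/|V(T_h)| \leq 1$ for every $h$.

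For the lower bound, I would invoke Theorem \ref{pegBinaryThm} directly. That theorem gives, for all sufficiently large $h$, the bound $P(T_h) \geq |V(T_h)| - 172$. Dividing by $|V(T_h)| = 2^{h+1}-1$ yields
\begin{equation*}
\frac{P(T_h)}{|V(T_h)|} \geq 1 - \frac{172}{|V(T_h)|} = 1 - \frac{172}{2^{h+1}-1}.
\end{equation*}
Since $|V(T_h)| \to \infty$ as $h \to \infty$, the right-hand side tends to $1$.

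Combining the two bounds and applying the squeeze theorem gives the desired limit. There is essentially no obstacle here — the real work was already done in Theorem \ref{pegBinaryThm}; the corollary is just the asymptotic reformulation, where the constant $172$ becomes negligible against the exponentially growing denominator $2^{h+1}-1$.
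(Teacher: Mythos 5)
Your proposal is correct and is exactly the argument the paper intends (the corollary is stated without proof as an immediate consequence of Theorem \ref{pegBinaryThm}): the trivial bound $P(T_h)\leq |V(T_h)|$ together with $P(T_h)\geq |V(T_h)|-172$ for large $h$ gives the limit by squeezing.
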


\begin{cor}\label{pegBinaryCor}
	Let $T_h$ be a complete binary tree of height $h$. 
	Then

$$
P(T_h)\geq |V(T_h)|-172
$$
for all $h\in\Bbb N$.
\end{cor}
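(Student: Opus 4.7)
The plan is to split the range of $h$ into two parts.

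For $h \leq 6$, one has $|V(T_h)| = 2^{h+1}-1 \leq 127 < 172$, so the right-hand side $|V(T_h)|-172$ is non-positive and the inequality holds trivially, since any pegging number is positive.

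For $h \geq 7$, the idea is to run the construction of Theorem~\ref{pegBinaryThm} and verify that the weight inequality $w_t(D') < 1$ holds not just in the limit but for each individual $h$. For $h \geq 11$ the construction of $D'$ is literally the one in the theorem, and a direct arithmetic evaluation of the closed-form expression derived there shows $w_t(D') < 1$ already at $h = 11$ with substantial margin, since the corrections to the limit decay geometrically with ratios $\omega$ and $2\omega^2$, both strictly below $1$. For the finitely many remaining heights $h \in \{7,8,9,10\}$, one constructs analogous distributions tailored to the smaller tree: again one picks a leaf $t$, omits a subtree of size at least $173$ surrounding it, and restores a few pegs at the greatest distances from $t$ afforded by the tree. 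The resulting weight sum is even smaller than in the large-$h$ regime, so $w_t(D') < 1$ persists. Theorem~\ref{reachThm} then gives $t \notin R(D')$, whence $P(T_h) > |V(T_h)|-173$, i.e.\ $P(T_h) \geq |V(T_h)|-172$.

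The principal obstacle is verifying the weight bound for each of the intermediate heights $7 \leq h \leq 10$; while this reduces to a finite arithmetic computation in each case, one has to take some care because the size budget $173$ is tight relative to $|V(T_h)|$ for the smallest such $h$, in particular for $h = 7$ where only $82$ pegs are allowed. Fortunately, the geometric decay of the weights with ratios $\omega$ and $2\omega^2$ makes each of these checks a routine finite computation, and once combined with the trivial range $h \leq 6$ and the theorem's range $h \geq 11$, they yield the uniform bound claimed.
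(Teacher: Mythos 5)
Your proposal is correct in outline, but it takes a genuinely different route from the paper in the non-trivial range of $h$. The paper likewise dismisses $|V(T_h)|\leq 172$ (i.e.\ $h\leq 6$) as trivial, but for larger $h$ it does not re-verify the weight inequality at finite heights: instead it embeds $T_h$ as a subtree of a complete binary tree $T$ of ``sufficiently large'' height, takes the failing distribution from Theorem~\ref{pegBinaryThm} on $T$, positions $T_h$ so that it contains the unpegged vertices, and invokes Monotonicity of Reach (Lemma~\ref{monoReach}) to conclude that the restricted distribution still cannot reach $t$. Your route---observing that $w_t(D')$ is a sum of positive terms that increases in $h$ to the limit $0.99975$, hence lies strictly below $1$ for every finite $h\geq 11$---buys a self-contained argument that avoids the ambient tree and the slightly delicate identification of reach in a subtree with reach in the supertree; the paper's transfer argument buys freedom from any re-computation. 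On the intermediate range $7\leq h\leq 10$ you are in fact more careful than the paper, whose proof only treats $h\geq 11$ explicitly and never returns to these heights. Your remaining gap is that these four finite checks are asserted rather than carried out, and your worry that the budget is ``tight'' for $h=7$ is misplaced---the checks are quite slack: for $h=7$ all $82=|V(T_7)|-173$ pegs can be placed at distance at least $13$ from a chosen leaf $t$, giving $w_t(D')\leq 64\omega^{14}+18\omega^{13}\approx 0.11<1$, and $h=8,9,10$ are similarly comfortable. Carrying out those four computations explicitly would complete your argument.
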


\begin{proof}
	If $|V(T_h)|\leq 172$ then this statement is trivially true. 
	Therefore, assume $|V(T_h)|> 172$. 
	Then $T_h$ is a subtree of $T$, a complete binary tree that we consider to have a ``sufficiently large height.''
	Let $D$ be a distribution of pegs on $T$ and $D'$ a distribution of pegs on $T_h$.
	We use the construction of $D$ described in Theorem \ref{pegBinaryThm} so that there exists at least one vertex $t\in V(T)$ that we cannot reach.
	If $h\geq 11$, then we can construct $D'$ in such a way that $T_h$ contains the tree of height 7 that was originally unpegged.
	In accordance of our refined results, at least 157 of the pertinent 172 vertices are unpegged.
	So $D'\subseteq D$, and by Monotonicity of Reach, $R(D')\subseteq R(D)$.
	Hence, $t\notin R(D')$, and $P(T_h)\geq |V(T_h)|-172$.
\end{proof}

	The next two lemmas are necessary for calculating bounds for the optimal-pegging number for $T_h$.

\begin{lemma}\label{summedWeightLem}
	Let $T_h$ be a complete binary tree, and let $L\subseteq V(T_h)$ be the set of leaves of $T_h$. 	
	Then for any vertex where $v\in V(T_h)$ is at level $l$, we have

$$
w_{L}(v) = \frac{(2\omega)^{h-l}}{\omega^2}\left(1-\omega(2\omega^2)^l	\right).
$$
\end{lemma}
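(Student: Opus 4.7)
The plan is to partition the leaves of $T_h$ according to whether they lie inside or outside the subtree rooted at $v$, derive a one-step recursion for $w_L(v)$ in terms of the summed weight at the parent of $v$, and solve this recursion by induction on $l$ using the defining relation $\omega^2+\omega=1$.

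First I would write $w_L(v) = \mathrm{In}(v) + \mathrm{Out}(v)$, where $\mathrm{In}(v)$ collects $\omega^{d(v,t)}$ over leaves $t$ in the subtree rooted at $v$ and $\mathrm{Out}(v)$ collects it over the remaining leaves. The subtree rooted at $v$ is itself a complete binary tree of height $h-l$, containing $2^{h-l}$ leaves each at distance exactly $h-l$ from $v$, so $\mathrm{In}(v) = (2\omega)^{h-l}$.

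Next I would set up a recursion for $\mathrm{Out}(v)$ using the parent $p$ of $v$ at level $l-1$ and the sibling $s$ of $v$ at level $l$. The leaves outside the subtree at $v$ split into (i) leaves in the sibling's subtree, at distance $2+d(s,t)$ from $v$, contributing $\omega^2\,\mathrm{In}(s) = \omega^2(2\omega)^{h-l}$, and (ii) leaves outside the subtree of $p$, at distance $1+d(p,t)$ from $v$, contributing $\omega\,\mathrm{Out}(p)$. This gives
\begin{equation*}
\mathrm{Out}(v) \;=\; \omega^2(2\omega)^{h-l} + \omega\,\mathrm{Out}(p),
\end{equation*}
together with the base case $\mathrm{Out}(\text{root}) = 0$.

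I would then prove by induction on $l$ that
\begin{equation*}
\mathrm{Out}(v) \;=\; \frac{(2\omega)^{h-l}\bigl(1-(2\omega^2)^l\bigr)}{\omega}.
\end{equation*}
The base case $l=0$ is immediate. Substituting the inductive hypothesis into the recursion, clearing common factors of $(2\omega)^{h-l}$, and rearranging, the induction step reduces to verifying the single identity $\omega^3 + 2\omega^2 = 1$, which follows from $\omega^2 + \omega = 1$ (since $\omega^3 = \omega(1-\omega) = \omega-\omega^2 = 2\omega-1$, giving $\omega^3+2\omega^2 = 2\omega-1+2(1-\omega)=1$). Finally, adding $\mathrm{In}(v) = (2\omega)^{h-l}$ and simplifying via $1 - \omega^2 = \omega$ collapses the expression to $\frac{(2\omega)^{h-l}}{\omega^2}\bigl(1 - \omega(2\omega^2)^l\bigr)$, as claimed. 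The only real pitfall is bookkeeping: one must carefully apply $\omega^2+\omega=1$ at the right moment so the two telescoping pieces combine into the compact form given in the statement.
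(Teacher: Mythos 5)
Your proposal is correct and takes essentially the same route as the paper: both argue by induction on the level, expressing the summed weight at $v$ via the subtree of leaves below $v$ (contributing $(2\omega)^{h-l}$) plus $\omega$ times the corresponding quantity at the parent, with the only difference being that you separate the ``inside'' and ``outside'' contributions while the paper recurses on $w_L$ directly and subtracts the double-counted term $\omega^2(2\omega)^{h-l}$ --- algebraically the identical recursion. Your base case, recursion, and the verification via $\omega^2+\omega=1$ all check out.
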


\begin{proof} We prove this by strong induction on $l$.

	\textit{Base Case:} $l=0$. 
	The root $r$ is the only vertex in $V(T_h)$ that is located at level 0. 
	Since $T_h$ is complete, all of the leaves in $T_h$ are located at level $h$, and so for all leaves $t$, $d(r,t)=h$. 
	Hence,

\begin{align*}
w_{L}(r) &=	\sum_{t\in L} w_t(r)
= \sum_{t\in L} \omega^{d(r,t)}	
=	\sum_{t\in L} \omega^{h}\\
&= 2^h\cdot \omega^h 
= (2\omega)^{h-0}\left(\frac{1-\omega}{\omega^2}\right)
= \frac{(2\omega)^{h-l}}{\omega^2}\left(1-\omega(2\omega^2)^0	\right).
\end{align*}

\textit{Inductive Hypothesis:} Assume that for all vertices at level $k$ that is at most $l-1$, we have 
$$
w_{L}(v) = \frac{(2\omega)^{h-k}}{\omega^2}\left(1-\omega(2\omega^2)^k	\right).
$$

	\textit{Inductive Step:} Consider the vertex $v$ located at level $l$ of $T_h$. 
	The subtree rooted at $v$ has a height of $h-l$, and so by the base case the summed weight of its leaves is $(2\omega)^{h-l}$. 
	For every other leaf in $L$, the summed weight of $v$ is the summed weight of the vertex $u$ adjacent to $v$ at level $l-1$ multiplied by $\omega$, since $v$ is a distance of 1 away from $u$. 
	However, when we add $\omega\times w_{L}(u)$ we have double counted the leaves of the subtree rooted at $v$. 
	We must therefore subtract $\omega^2-w_L(v)$.
	So our summed weight for $v$ with respect to $L$ is:

\begin{align*}
w_{L}(v) &=
 (2\omega)^{h-l}(1-\omega^2) + \omega\cdot w_{L}(u)\\
&=(2\omega)^{h-l}(1-\omega^2)+\frac{(2\omega)^{h-l+1}}{\omega}\left(1-\omega(2\omega^2)^{l-1}	\right)\\
&=(2\omega)^{h-l}\left[1-\omega^2+2\left(1-\omega(2\omega^2)^{l-1}	\right)	\right]\\
&=\frac{(2\omega)^{h-l}}{\omega^2}\left(\omega^2-\omega^4+2\omega^2-2\omega^3(2\omega^2)^{l-1}	\right)\\
&=\frac{(2\omega)^{h-l}}{\omega^2}\left(3\omega^2-\omega^4-\omega(2\omega^2)^{l}	\right)\\
&=\frac{(2\omega)^{h-l}}{\omega^2}\left(1-\omega(2\omega^2)^{l}		\right).
\end{align*}
\end{proof}

	We proceed using the technique following Corollary \ref{reachSummedCor} to find a lower bound for the optimal-pegging number. 
	Our first step is to rank each level of the tree according to the weights of its vertices. 
	In other words, the highest ranking level is the one that contains the vertex with the largest weight, the second highest ranking level is the one that contains the vertex with the second largest weight, and so on.
	Since every two vertices in the same level have the same weight, a complete ranking exists.

\begin{lemma}\label{maxWeightLem}
	Let $T_h$ be a complete binary tree, and let $L\subseteq V(T_h)$ be the set of leaves of $T_h$. 
	If $v_l$ denotes an arbitrary vertex at level $l$ in $T_h$, then for $5\leq i< h$, we have
\begin{equation*}
w_{L}(v_1) > w_{L}(v_2)> w_{L}(v_3) > w_{L}(v_0)>w_{L}(v_4) >\dots> w_{L}(v_i)>w_{L}(v_{i+1}) > \cdots.
\end{equation*}
\end{lemma}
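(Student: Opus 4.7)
The plan is to reduce the comparison of summed weights to a real-valued function of one integer variable. From Lemma \ref{summedWeightLem}, a short simplification (using $(2\omega)^{-l}(2\omega^2)^l = \omega^l$) gives
$$w_{L}(v_l) = \frac{(2\omega)^{h}}{\omega^2}\bigl[(2\omega)^{-l} - \omega^{l+1}\bigr].$$
The factor in front is positive and independent of $l$, so I would set $g(l) = (2\omega)^{-l} - \omega^{l+1}$ and prove instead the single chain $g(1) > g(2) > g(3) > g(0) > g(4) > g(5) > \cdots$.

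The bulk of the argument is to show that $g$ is strictly decreasing on $\{l\ge 1\}$. I would compute
$$g(l) - g(l+1) = (2\omega)^{-(l+1)}(2\omega - 1) - \omega^{l+3},$$
and then, using the golden-ratio identity $\omega^3 = 2\omega - 1$, rearrange positivity to $1 > 2^{l+1}\omega^{2l+1}$. Since the right-hand side is multiplied by $2\omega^2 < 1$ each time $l$ increases, it suffices to verify the case $l=1$, namely $4\omega^3 = 8\omega - 4 < 1$, which is equivalent to $\omega < 5/8$ and follows from $\sqrt{5} < 9/4$. This single monotonicity statement delivers $g(1) > g(2) > g(3)$ and $g(4) > g(5) > \cdots$ at once.

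What remains is to insert $g(0)$ in its correct slot, i.e.\ to check $g(3) > g(0) > g(4)$. Reducing repeatedly by $\omega^2 = 1 - \omega$ (so that $(2\omega)^{-1} = (1+\omega)/2$, $\omega^4 = 2-3\omega$, $\omega^5 = 5\omega - 3$, and so on) puts each expression in the form $a+b\omega$; one obtains $g(0) = 1 - \omega$, $g(3) = (26\omega - 13)/8$, and $g(4) = (53 - 77\omega)/16$, so the two required inequalities become the explicit numerical statements $34\omega > 21$ and $61\omega > 37$, both of which follow by squaring. The main obstacle is the tightness of $g(3) > g(0)$: numerically the two values differ only in the third decimal place, and the bound $\omega > 21/34$ is just barely true---equivalent to $5\cdot 17^2 > 38^2$, i.e.\ $1445 > 1444$. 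This razor-thin margin is precisely why the root ends up between levels $3$ and $4$ rather than elsewhere, and it is the only genuinely delicate step in the argument.
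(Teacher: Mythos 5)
Your proof is correct and follows the same strategy as the paper's: show the summed weight is strictly decreasing in the level for $l\geq 1$ by examining consecutive differences, then separately verify that level $0$ slots in between levels $3$ and $4$. Your execution is in fact tighter than the paper's --- where the paper asserts the monotonicity inequality $\omega>(1-\omega)[(2\omega^2)^{l+1}+1]$ for $1\leq l\leq h-1$ without verification and settles the level-$0$ comparisons with decimal approximations of ratios, you reduce everything to exact integer inequalities, which matters given that $g(3)>g(0)$ ultimately rests on the margin $1445>1444$.
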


\begin{proof}
	Let $v_l$ and $v_{l+1}$ be vertices at levels $l$ and $l+1$ of $T_h$ respectively. 
	Then by Lemma \ref{summedWeightLem},

\begin{align*}
w_{L}(v_l)-w_{L}(v_{l+1})
&= \frac{(2\omega)^{h-l}}{\omega^2}\left(1-\omega(2\omega^2)^l	\right)- \frac{(2\omega)^{h-l-1}}{\omega^2}\left(1-\omega(2\omega^2)^{l+1}	\right)\\
&= \frac{(2\omega)^{h-l-1}}{\omega^2}\left[2\omega-(2\omega^2)^{l+1}-1+	\omega(2\omega^2)^{l+1}\right]\\
&= \frac{(2\omega)^{h-l-1}}{\omega^2}\left[(2\omega^2)^{l+1}(\omega-1)+(\omega-1)+\omega\right]\\
&= \frac{(2\omega)^{h-l-1}}{\omega^2}\left[(\omega-1)\left((2\omega^2)^{l+1}+1\right)+\omega\right].
\end{align*}

	So $w_{L}(v_l)-w_{L}(v_{l+1})>0$ when $\omega>(1-\omega)\left[(2\omega^2)^{l+1}+1	\right]$, i.e. for $1\leq l\leq h-1$. 
	Therefore, the summed weight of each vertex decreases as the level of the vertex increases, except when the level increases from 0 to 1. 
	
	Now all we need to prove is that $w_{L}(v_4)<w_{L}(v_0) <w_{L}(v_3)$.
	This can be shown directly by calculating the ratios between the weights:

\begin{equation*}
\frac{w_{L}(v_0)}{w_{L}(v_4)}
=\frac{(2\omega)^{h}}{(2\omega)^{h-4}\left(1-\omega(2\omega^2)^4	\right)}
=\frac{(2\omega)^{4}}{1-16\omega^9	}
\approx 1.12937.
\end{equation*}

	Therefore, $w_{L}(v_4)<w_{L}(v_0)$. 
	Finally, we calculate the ratio of $w_{L}(v_0)$ to $w_{L}(v_3)$:
	
\begin{equation*}
\frac{w_{L}(v_0)}{w_{L}(v_3)}
=\frac{(2\omega)^{h}}{(2\omega)^{h-3}\left(1-\omega(2\omega^2)^3	\right)}
=\frac{(2\omega)^{3}}{1-8\omega^7	}
\approx 0.995713.
\end{equation*}

	Thus, $w_{L}(v_0)<w_{L}(v_3)$.
\end{proof}

We are now ready to present our bounds on the optimal-pegging number of $T_h$.

\begin{theorem}\label{optPegBinaryThm}
For a complete binary tree of height $h\geq 8$, $\frac{|V(T_h)|}{16}<p(T_h)<\frac{|V(T_h)|}{8}$.
\end{theorem}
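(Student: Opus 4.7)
I separate the two inequalities.

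For the lower bound $|V(T_h)|/16 < p(T_h)$, I apply Corollary \ref{reachSummedCor} with $L$ equal to the set of leaves of $T_h$, so that $|L| = 2^h$. If $D$ pegs $T_h$, then $L \subseteq R(D)$ and hence $w_L(D) \geq 2^h$. Since $w_L(D) = \sum_{v\in D} w_L(v)$ and $w_L(v)$ depends only on the level of $v$, the maximum of $w_L(D)$ over distributions of size $k$ is obtained by placing pegs on the $k$ vertices of largest summed weight. By Lemma \ref{maxWeightLem}, the summed weights order the levels as $1, 2, 3, 0, 4, 5, \ldots$. For $k = 2^{h-3}-1 = \lfloor |V(T_h)|/16 \rfloor$, this optimal placement fills exactly every vertex on levels $0$ through $h-4$. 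I then evaluate
\[
\sum_{l=0}^{h-4} 2^l \, w_L(v_l)
\]
using the closed form of Lemma \ref{summedWeightLem} together with the identity $1-\omega = \omega^2$ and the geometric sums $\sum_j \omega^j$ and $\sum_j (2\omega)^j$, and show this cumulative weight is strictly less than $2^h$. Consequently no distribution of size at most $2^{h-3}-1$ can peg $T_h$, giving $p(T_h) \geq 2^{h-3} > |V(T_h)|/16$.

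For the upper bound $p(T_h) < |V(T_h)|/8$, the plan is to construct an explicit distribution $D$ of size strictly less than $(2^{h+1}-1)/8$ that pegs $T_h$. I would exploit the self-similar structure of the binary tree by partitioning the lower portion of $T_h$ into $2^{h-k}$ copies of $T_k$ rooted at level $h-k$, for a small fixed $k$, and placing a carefully chosen peg gadget of fixed size $c$ in each copy. Each gadget is designed so that, when supplemented by a peg delivered to the subtree's root by moves from higher up, it reaches every vertex of the subtree. A modest collection of additional pegs on the top portion of $T_h$ (levels $0$ through $h-k-1$) handles the remaining vertices and ensures that a peg can be propagated down to each subtree root. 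If $c$ is chosen so that $c \cdot 2^{h-k}$ plus the top overhead is strictly below $2^{h-2}$, then $|D| < |V(T_h)|/8$.

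The main obstacle is the upper bound: one must design an explicit peg gadget on the small subtree $T_k$ and verify through a case analysis of pegging-move sequences that, together with a peg at its root, it reaches every vertex of the subtree. One also has to choose $k$ correctly, arrange the top-level pegs so that each subtree root becomes reachable, and keep the total peg count below $|V(T_h)|/8$. By contrast, the lower bound is essentially a bookkeeping calculation once the level ordering supplied by Lemma \ref{maxWeightLem} is in place.
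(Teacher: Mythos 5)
Your lower bound is exactly the paper's argument and is correct: apply Corollary~\ref{reachSummedCor} with $L$ the leaf set, use Lemma~\ref{maxWeightLem} to see that the $2^{h-3}-1$ vertices of largest summed weight are precisely those on levels $0$ through $h-4$, and then the closed form of Lemma~\ref{summedWeightLem} with $1-\omega=\omega^2$ and the two geometric sums gives $\sum_{l=0}^{h-4}2^l w_L(v_l)=2^h(1-\omega^{h-3}+2^h\omega^{2h-7}-\omega^{h-4})<2^h=|L|$, so no distribution of size $2^{h-3}-1$ reaches all leaves and $p(T_h)\geq 2^{h-3}>|V(T_h)|/16$. Nothing is missing here beyond carrying out the algebra you describe.

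The upper bound, however, has a genuine gap: you have written a plan, not a proof. The entire content of that half of the theorem is the explicit distribution together with a verification that it pegs the tree, and you defer precisely that (``one must design an explicit peg gadget \ldots and verify through a case analysis''). Your gadget-per-subtree scheme also hides a real difficulty you do not resolve: delivering a peg to the root of each copy of $T_k$ from ``a modest collection of additional pegs'' at the top requires, for each such root, a jump $\peg{u}{v}{w}$ with both $u$ and $v$ occupied above it, and you must check that the top-level pegs suffice for this while keeping the total under $2^{h-2}$; moreover, choosing $c<2^{k-2}$ so that the gadget plus one delivered peg reaches all of $T_k$ is essentially the theorem again for height $k$, so without an explicit base construction the argument is circular. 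The paper's route avoids the gadget design by pegging a contiguous band of levels: all of levels $0$ through $5$ in $T_8$ (giving $63/511<1/8$), refined via the observation that a fully pegged $T_5$ can push a peg a distance $4$ beyond itself, so that levels $4$ through $9$ of $T_{12}$ suffice and the ratio $1008/8191<1/8$ carries over to all $h\geq 8$. To complete your proof you either need to exhibit and verify such an explicit distribution on a fixed small tree, or fill in the gadget, the delivery mechanism, and the count.
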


\begin{proof}

	Let us first prove the lower bound. 
	Let $L$ be the set of leaves of $T_h$. 
	Since $h> 7$, by Lemma \ref{maxWeightLem} we know that the $h-3$ levels of vertices with the largest summed weights with respect to $L$ are levels 0 through $h-4$. 
	Let $v_l$ denote a vertex in level $l$. 
	Since each level $l$ has $2^l$ vertices, we can calculate the summed weight with respect to $L$ of the distribution $D$ made up of all the vertices in the first $h-3$ levels:

\begin{align*}
w_{L}(D)&= \sum\limits_{i=0}^{h-4}2^i w_{L}(v_i)\\
&= \sum\limits_{i=0}^{h-4}\frac{2^i(2\omega)^{h-i}}{\omega^2}\left(1-\omega(2\omega^2)^i	\right)\\
&= \sum\limits_{i=0}^{h-4}\frac{2^h\omega^{h-i}}{\omega^2}\left(1-\omega(2\omega^2)^i	\right)\\
&= \frac{2^h}{\omega^2}\sum\limits_{i=0}^{h-4}\left(\omega^{h-i}-2^i\omega^{h+i+1}	\right)\\
&= \frac{2^h}{\omega^2}\sum\limits_{i=0}^{h-4}\left(\omega^{h-i}-\omega^{h+1}(2\omega)^i	\right)\\
&= \frac{(2\omega)^h}{\omega^2}\sum\limits_{i=0}^{h-4}\left(\omega^{-i}-\omega(2\omega)^i	\right)\\
&= \frac{(2\omega)^h}{\omega^2}\left[\sum\limits_{i=0}^{h-4}\omega^{-i}-\omega \sum\limits_{i=0}^{h-4}(2\omega)^i\right]\\
&= \frac{(2\omega)^h}{\omega^2}\left[\frac{\omega^{3-h}-1}{\omega^{-1}-1}-\omega \left(\frac{(2\omega)^{h-3}-1} {2\omega -1}\right)\right]\\
&= \frac{(2\omega)^h}{\omega^2}\left[\frac{\omega^{3-h}-1}{\omega}-\frac{(2\omega)^{h-3}-1} {\omega^2}\right]\\
&=2^h(1-\omega^{h-3}+2^h\omega^{2h-7}-\omega^{h-4})
< 2^h
= |L|.
\end{align*}

	So $p(T_h)\geq \sum\limits_{i=0}^{h-4}2^i +1=  2^{h-3} >\frac{|V(T_h)|}{16}$.

	The upper bound can be established simply by putting pegs on every vertex of levels 0 through 5 of a tree of height 8. 
	It is possible to reach all the vertices, and since $T_8$ is a subtree of any complete binary tree of height greater than 8, this result can be generalized to all complete binary trees of height at least 8. 
	Furthermore, if we extend a path from the root of a complete binary tree of height 5 with pegs on every vertex of the tree, we can reach a distance of 4 along the path. 
	So given a tree of height 12, if levels 4 through 9 are covered with pegs, we can reach all the vertices in the tree. 
	Therefore, $\frac{(2^4+\dots +2^9)|V(T_h)|}{2^{13}-1}=\frac{1008|V(T_h)|}{8191}$, and so $p(T_h)<\frac{1008|V(T_h)|}{8191}<\frac{|V(T_h)|}{8}$.
\end{proof}

\section{Infinitary Trees}\label{infTrees}

	A \textit{complete infinitary tree of height $h$} is a tree rooted at a vertex $r$ such that each vertex has a countably infinite number of children and the distance between $r$ and any leaf is $h$. 
	Since each level consists of an infinite number of pairwise non-adjacent vertices, it is impossible to calculate the pegging number of such a tree in any meaningful way. 

	However, finding specific finite distributions of pegs that can peg any vertex in the tree is within our reach. 
	In this section we compute the optimal-pegging number of the complete infinitary tree of height $h$.

\begin{lemma}\label{infTreeLem}
	Let $T_h$ denote a complete infinitary tree of height $h$ rooted at vertex $r$, and let $F_n$ denote the $n$th Fibonacci number. 
	Then any distribution $D_k$ which pegs levels 0 through $k$ requires at least $F_{k+3}-1$ pegs.
	Moreover, $r\in D_k$.
\end{lemma}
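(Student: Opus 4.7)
The plan is to use strong induction on $k$, establishing $|D_k|\geq F_{k+3}-1$ and that equality forces $r\in D_k$ simultaneously. The base cases $k=0$ and $k=1$ follow by direct inspection: a single peg can perform no pegging move, so reaching $r$ with one peg forces $r\in D_0$; for $k=1$, a finite case check (using Lemma~\ref{Melanie}) shows the only two-peg distributions whose reach contains every child of $r$ are those of the form $\{r,c\}$.

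For the inductive step, let $D$ be a distribution pegging levels $0$ through $k+1$ of $T_h$, write $c_1,c_2,\ldots$ for the children of $r$, and let $T_{c_i}$ denote the infinitary subtree of height $h-1$ rooted at $c_i$. Because $D$ is finite while each level of $T_h$ is countably infinite, only finitely many $T_{c_i}$ intersect $D$; the remaining \emph{isolated} subtrees contain infinitely many level-$(k+1)$ targets that nevertheless must be reached. By Lemma~\ref{Melanie}, the only way to place a peg inside an isolated $T_{c_i}$ is a move $\peg{c_j}{r}{c_i}$ with $j\ne i$, and this requires $r$ and $c_j$ to be simultaneously pegged. This bottleneck at $r$ is the mechanism that enforces the Fibonacci growth.

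Assuming first that $r\in D$, I would analyse what $D$ needs in order to reach a single level-$(k+1)$ target $v$ inside an isolated $T_{c_i}$ (a vertex at depth $k$ in $T_{c_i}$). Each additional import into $T_{c_i}$ beyond the first has to re-produce $r$ using, by Lemma~\ref{pegRoutesLemma}, a path from some initial peg through a child of $r$ and that child's own child, consuming two auxiliary pegs; by Lemma~\ref{ancestorLem}, the ancestor sets of different imports are pairwise disjoint, so the costs add without overlap. Recursively unpacking this process reduces the task to pegging levels $0$ through $k$ in some non-isolated subtree $T_{c_1}$ and levels $0$ through $k-1$ in another non-isolated subtree $T_{c_2}$; applying the inductive hypothesis to these smaller infinitary trees and adding the peg at $r$ yields
\[
|D|\;\ge\;1+(F_{k+3}-1)+(F_{k+2}-1)\;=\;F_{k+4}-1.
\]
The ``$r\in D_k$'' half of the inductive hypothesis, applied inside each smaller subtree, is essential: it guarantees that each sub-distribution is actually achievable at the claimed Fibonacci size with its root already pegged, so no hidden cost to reach that root is overlooked.

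The remaining task is to rule out $r\notin D$ in the equality case. If $r\notin D$, then every production of $r$ consumes two further pegs traced through a child of $r$ and a grandchild, and by Lemma~\ref{ancestorLem} these cannot be amortized against the imports counted above, so $|D|$ strictly exceeds $F_{k+4}-1$, contradicting minimality. The main obstacle of the plan is the ``forced decomposition'' step in the previous paragraph: one must rule out more exotic strategies, such as spreading pegs thinly across many subtrees or concentrating them deeply inside a single very rich subtree. The cleanest way I see to do this is to attach disjoint ancestor sets (via Lemma~\ref{ancestorLem}) to the successive pegs produced at $c_i$, at each vertex along the path from $c_i$ to $v$, and at each re-pegging of $r$, and then to bound each ancestor set by the inductive hypothesis on a smaller infinitary tree.
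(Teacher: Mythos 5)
Your proposal shares the paper's skeleton --- strong induction on $k$, the pigeonhole argument producing an empty (``isolated'') subtree, disjointness of ancestor sets via Lemma~\ref{ancestorLem}, and the Fibonacci arithmetic $1+(F_{k+3}-1)+(F_{k+2}-1)=F_{k+4}-1$ --- but the step that actually carries the count is missing. You write that ``recursively unpacking this process reduces the task to pegging levels $0$ through $k$ in some non-isolated subtree $T_{c_1}$ and levels $0$ through $k-1$ in another non-isolated subtree $T_{c_2}$,'' and you yourself flag this as the main obstacle. That is indeed where the proof lives, and the reduction as you state it is not forced: the sub-distribution responsible for producing a given peg need not sit inside a single subtree $T_{c_j}$ (pegs may be scattered over many children of $r$ and funnelled through $r$), and after the first import the later entries into the isolated $T_{c_i}$ are not all of the form $\peg{c_j}{r}{c_i}$ --- once $c_i$ carries a peg, a move such as $\peg{r}{c_i}{x}$ pushes deeper while consuming the peg on $c_i$, so your per-import cost accounting does not obviously close.

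The paper's decomposition is different and supplies the missing idea. Fix a target $w$ at level $k$ inside the empty subtree and a minimal move sequence $M$ reaching it; by Lemma~\ref{Melanie} the last move is $m=\peg{u}{v}{w}$ with $u$ at level $k-2$ and $v$ at level $k-1$ on the unique path to $w$. Lemma~\ref{ancestorLem} then splits $D_k$ into the disjoint ancestor sets of the pegs on $u$ and on $v$; each part is a distribution reaching a vertex at level $k-2$ (resp.\ $k-1$) of an otherwise empty subtree, so the inductive hypothesis applies to each part separately, giving at least $F_{k+1}-1$ and $F_{k+2}-1$ pegs. Since the parts are disjoint, at most one contains $r$, and the ``moreover'' clause of the hypothesis charges the part missing $r$ one extra peg; hence $|D_k|\geq(F_{k+1}-1)+(F_{k+2}-1)+1=F_{k+3}-1$, and if $r\notin D_k$ both parts pay the surcharge, forcing $|D_k|\geq F_{k+1}+F_{k+2}=F_{k+3}$ and hence $r\in D_k$ for a minimal distribution. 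This ``last move'' partition is exactly the device that rules out your ``exotic strategies'' in one stroke. Note also that the paper's proof includes the recursive construction attaining $F_{k+3}-1$ (needed to make the minimality claim $r\in D_k$ non-vacuous and to feed Theorem~\ref{infTreeThm}), which your proposal omits entirely.
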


\begin{proof}We prove this by strong induction on $k$. Since the base cases are trivial, we assume that the theorem statement is true for all $2\leq i<k$, where $k\leq h$. Let $D_k$ be a minimal distribution needed to reach levels 0 through $k$ of $T_h$. 
	Since $D_k$ has finitely many pegs, by the Pigeonhole Principle, there exists at least one subtree of $T_h$ rooted at a child of $r$ with no vertices in $D_k$. 

	Let $w$ be a vertex at level $k$ in this subtree of $T_h$, and let $M$ be a minimum finite sequence of pegging moves used to reach $w$ from $D_k$. 
	The last move $m\in M$ is $m=\peg{u}{v}{w}$ for some two vertices $u$ in level $k-2$ and $v$ in level $k-1$. 
	By Lemma \ref{ancestorLem}, we can partition $D_k$ into two distributions, one that can peg $u$ and one that can peg $v$.
	Suppose $r\notin D_k$. 
	Then neither of the parts contain $r$, and by the inductive hypothesis, they each contain at least $F_{k+2}$ and $F_{k+1}$ vertices, respectively. 
	This implies that $|D_k|\geq F_{k+2}+F_{k+1}>F_{k+3}-1$. 
	So in order for $D_k$ to be of size $F_{h+3}-1$, it must contain $r$.




 
	However, since both partitions cannot contain $r$, one must be sub-optimal. 
	It is evident that if we can place a peg on a vertex in the $k$th level of this subtree, then we are able reach all the vertices in the first $k-1$ levels of the subtree.
	Then

\begin{equation*}
|D_k|
\geq (F_{k+1} -1) + (F_{k+2}-1)+1
= F_{k+3}-1.
\end{equation*}

	The distributions for $D_0$ and $D_1$ are obvious.
	Now we will show how to recursively construct a distribution of size $F_{k+3}-1$ which pegs level 0 through $k$ for a tree of height $h\geq k$ where $k\geq2$.
 
	Let $T_h$ be an infinitary tree of height $h$ and let $D_k=D_{k-2}\cup D_{k-1} \cup\{x,y\}$ where $D_{k-1}$ and $D_{k-2}$ are optimal distributions for pegging the first $k-1$ level and the first $k-2$ level of $T_{h}$, respectively, $x$ and $y$ are not contained in $D_{k-1}$ or $D_{k-2}$, and $x$ is adjacent to both $y$ and $r$. 
	Since $T_h$ is an infinitary tree, we can choose $D_{k-1}$ and $D_{k-2}$ so that they intersect only at $r$.
	We can use $D_{k-1}$ to reach any vertex $v$ a distance of $k-1$ away from $r$, and $D_{k-2}\cup\{x,y\}\backslash\{r\}$ to reach any vertex a distance of $k-2$ away from $r$. 
	If we choose to use $D_{k-2}\cup\{x,y\}$ to peg $u$, the vertex a distance of $k-2$ away from $r$ and adjacent to $v$, then we can perform the pegging move $m=\peg{u}{v}{w}$ where $w$ is a vertex adjacent to $v$ that is a distance of $k$ away from $r$. 
	So we can reach any vertex in levels 0 through $k$ of $T_h$.
	The size of this distribution is
	
\begin{equation*}
|D_k| =|D_{k-1}|+|D_{k-2}|-1+2
= (F_{k+1} -1) + (F_{k+2}-1)+1
= F_{k+3}-1.
\end{equation*}
\end{proof}
\begin{theorem}\label{infTreeThm}
Let $T_h$ be an infinitary tree of height $h$. Then
\begin{equation*}
p(T_h) = F_{h+3}-1
\end{equation*}
where $F_n$ denotes the $n$th Fibonacci number.
\end{theorem}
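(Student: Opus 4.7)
The theorem is essentially an immediate consequence of Lemma \ref{infTreeLem}, so my plan is simply to unpack why. The key observation is that pegging $T_h$ means reaching every vertex, which (since a complete infinitary tree of height $h$ has vertices precisely at levels $0, 1, \dots, h$) is the same as pegging levels $0$ through $h$. Hence $p(T_h)$ is by definition the minimum size of any distribution $D_h$ that pegs levels $0$ through $h$.

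For the lower bound, I would invoke Lemma \ref{infTreeLem} with $k=h$: any distribution that pegs levels $0$ through $h$ has size at least $F_{h+3}-1$, so $p(T_h) \geq F_{h+3}-1$.

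For the upper bound, I would appeal to the constructive second half of the proof of Lemma \ref{infTreeLem}, which exhibits, by recursion on $k$, a distribution $D_h = D_{h-2} \cup D_{h-1} \cup \{x,y\}$ of size exactly $F_{h+3}-1$ from which every vertex in levels $0$ through $h$ is reachable. Taking this distribution witnesses $p(T_h) \leq F_{h+3}-1$. Combining the two inequalities gives $p(T_h) = F_{h+3}-1$.

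There really is no significant obstacle left at this stage: all the combinatorial content (the Fibonacci recursion, the use of Lemma \ref{ancestorLem} to partition an optimal distribution, and the verification that $r \in D_k$ in any optimum) has been absorbed into Lemma \ref{infTreeLem}. The only thing to be careful about is the small-$h$ base cases ($h=0,1$), where the induction in Lemma \ref{infTreeLem} starts; one should check that $F_3-1 = 1$ and $F_4-1=2$ agree with the obvious distributions (a single peg at $r$ for $h=0$, and two pegs at $r$ and a child of $r$ for $h=1$). Once those are noted, the theorem follows in one line from the lemma.
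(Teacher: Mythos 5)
Your proof is correct and matches the paper's argument exactly: both derive the lower bound by applying Lemma \ref{infTreeLem} with $k=h$ and the upper bound from the explicit recursive distribution constructed in that lemma's proof. Your extra remark about verifying the small-$h$ base cases is a reasonable bit of care but does not change the substance.
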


\begin{figure}[h]
	\centering
		\includegraphics[scale=0.4]{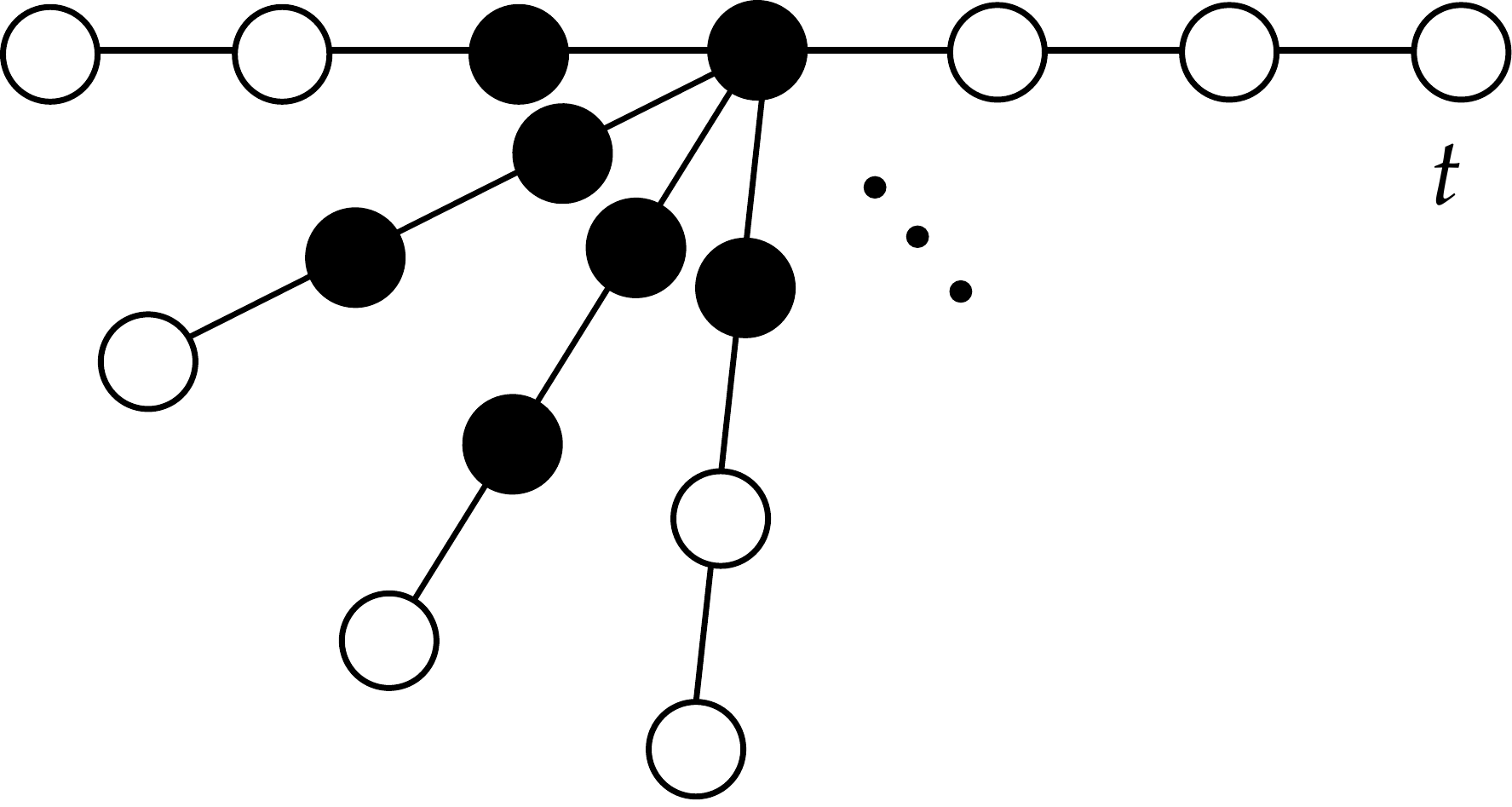}
	\caption{The infinitary tree $T_3$ with distribution $D_3$ containing $F_6-1$ pegs that can reach target vertex $t$.}
\end{figure}

\begin{proof}
	By Lemma \ref{infTreeLem}, we need at least $F_{h+3} - 1$ pegs to peg levels 0 through $h$ of $T_h$, so $p(T_h)\geq F_{h+3}-1$. 
	Also by Lemma \ref{infTreeLem}, there exists a distribution $D$ of this size such that $R(D)=V(T_h)$. 
	So $p(T_h)=F_{h+3}-1$.
\end{proof}

	By combining our results from Theorems \ref{optPegBinaryThm} and \ref{infTreeThm}, we are confronted by the surprising and counterintuitive reality of the existence of trees which have optimal-pegging numbers that increase with the removal of a leaf. 
	We show this in the following corollary.

\begin{cor}\label{supertrees}
There exist infinitely many trees which have optimal-pegging numbers that increase with the removal of a leaf.
\end{cor}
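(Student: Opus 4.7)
The plan is to exploit the gap between Theorem \ref{optPegBinaryThm} and Theorem \ref{infTreeThm}. Write $B_h$ for the complete binary tree of height $h$ and $I_h$ for the complete infinitary tree of height $h$. Since $|V(B_h)|/16$ grows like $2^{h-3}$ while $F_{h+3}-1$ grows like $\phi^{h+3}/\sqrt{5}$ and $2 > \phi$, there is some $h_0$ such that for every $h \geq h_0$ we have
\[
p(B_h) \;>\; \frac{|V(B_h)|}{16} \;>\; F_{h+3}-1 \;=\; p(I_h).
\]

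For each such $h$, I would first embed $B_h$ into $I_h$ by mapping its root to the root of $I_h$ and, at each internal image, selecting two of the infinitely many children. Let $D \subseteq V(I_h)$ be the finite optimal distribution of size $F_{h+3}-1$ constructed in Lemma \ref{infTreeLem}, and let $T$ be the (finite, connected) subtree of $I_h$ obtained as the union of the embedded copy of $B_h$ together with the finitely many unique $I_h$-paths joining each $d \in D$ to the root.

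The central step is to establish $p(T) \leq F_{h+3}-1$, i.e.\ that $D$ still pegs $T$. Given any $t \in V(T)$, Lemma \ref{Melanie} provides a pegging sequence in $I_h$ from $D$ to $t$ in which every move $\peg{a}{b}{c}$ satisfies $d(c,t) < d(a,t)$; such a move uses only vertices on the unique $I_h$-path from $a$ to $t$. Since $T$ is a connected subtree of $I_h$, whenever $a, t \in V(T)$ this entire path lies in $V(T)$. Inducting on the moves (the induction hypothesis being that every peg ever placed lies in $V(T)$, which begins with $D \subseteq V(T)$), the sequence is legal within $T$, so $t \in R_T(D)$.

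To finish, I would delete the leaves of $T$ lying outside the embedded $B_h$ one at a time, producing a chain $T = T^{(0)} \supset T^{(1)} \supset \cdots \supset T^{(N)} = B_h$. Since $p(T^{(0)}) \leq F_{h+3}-1 < p(T^{(N)})$, some index $i$ must satisfy $p(T^{(i+1)}) > p(T^{(i)})$; the tree $T^{(i)}$, together with the leaf whose removal yields $T^{(i+1)}$, is a witness. Running this for each $h \geq h_0$ produces witnesses of arbitrarily large size, hence infinitely many distinct ones. The main obstacle is the middle step: Lemma \ref{infTreeLem} builds $D$ by placing pieces in several disjoint subtrees at the root and repeatedly invoking the unbounded fan-out of $I_h$, so one must verify that the pegging sequences demanded by $D$ remain legal after truncation to the finite $T$ — which is exactly what Lemma \ref{Melanie} guarantees by forcing all moves onto the unique path to the target.
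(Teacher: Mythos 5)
Your proposal is correct and follows essentially the same route as the paper: compare the binary-tree lower bound $|V(T_h)|/16$ with the infinitary-tree value $F_{h+3}-1$, pass to a finite subtree of the infinitary tree that contains both the optimal distribution $D$ and a copy of the binary tree, use Lemma \ref{Melanie} to see that $D$ still pegs that finite subtree, and then interpolate down to the binary tree by deleting leaves until the optimal-pegging number is forced to jump. The only difference is cosmetic --- the paper uses a complete $(F_{h+3}-2)$-ary tree as the intermediate finite tree where you use the leaner union of the embedded binary tree with the paths from $D$ to the root, and you spell out the leaf-deletion chain that the paper compresses into an appeal to the Pigeonhole Principle.
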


\begin{proof}
	Let $T_h$ and $T'_h$ denote a complete binary tree and an infinitary tree, respectively, both of which have height $h$. 
	By Theorems \ref{optPegBinaryThm} and \ref{infTreeThm}, $p(T_h)> p(T'_h)$ provided that $\frac{1}{16}|V(T_h)|> F_{h+3}-1$, or

$$
\frac{1}{16}(2^{h+1}-1)>\frac{1}{\sqrt{5}}\left(\frac{1+\sqrt{5}}{2}	\right)^{h+3}-1.
$$

This is true if

\begin{align*}
\frac{2^h}{8}&\geq\frac{1}{\sqrt{5}}\left(\frac{1+\sqrt{5}}{2}\right)^{h+3}\\
&=\frac{1}{\sqrt{5}}\left(\frac{1+\sqrt{5}}{2}\right)^3\left(\frac{1+\sqrt{5}}{2}\right)^{h}\\
&=\frac{2^h(1+\sqrt{5})^3}{8\sqrt{5}}\left(\frac{1+\sqrt{5}}{4}\right)^{h}\\
\end{align*}

or equivalently

$$
\frac{\sqrt{5}}{(1+\sqrt{5})^3}\geq\left(\frac{1+\sqrt{5}}{4}\right)^h.
$$

	This is true for $h=13$. Since $(1+\sqrt{5})/4<1$, $((1+\sqrt{5})/4)^h$ is a monotonically decreasing function. 
	Hence $p(T_h)>p(T'_h)$ for all $h\geq 13$.

	Let $D$ be the recursively defined distribution on the infinitary tree $T'_h$ described in the proof of Lemma \ref{infTreeLem}.
	By Lemma \ref{infTreeLem}, the root of $T'_h$ is covered with a peg, and the distance between every vertex in $D$ and the root is at most 2. 
	Therefore, there exists a complete $(F_{h+3}-2)$-ary tree of height $h$, $T''_h$, such that $T''_h$ is a subtree of $T'_h$, and $D\subseteq V(T''_h)$. 
	By Lemma \ref{Melanie}, there exists a finite sequence of pegging moves $M$ such that $\anc(D,M,v)\subseteq V(T''_h)$ for every $v\in T''_h$. 
	Therefore, $p(T''_h)\leq p(T'_h)<p(T_h)$ for all $h\geq 13$. 
	Since $T''_h$ is finite, by the Pigeonhole Principle, there exists at least one tree $T$ that is a subtree of $T''_h$ and a supertree of $T_h$ such that the deletion of a leaf from $T$ results in a tree with a larger optimal-pegging number.
\end{proof}

\section{Caterpillars and Lobsters}\label{caterpillars}

	This section is devoted to computations of the optimal-pegging numbers of caterpillar graphs, as well as calculating the lowest upper bound on the optimal-pegging number of lobster graphs. 
	A \textit{caterpillar graph}, $E$, is a tree such that if all leaves and their incident edges are removed, the remainder of the graph forms a path. 
	A \textit{lobster graph}, $L$, is a tree such that if all leaves and their incident edges are removed, the remainder of the graph is a caterpillar.

\begin{theorem}\label{caterpillarThm}
The caterpillar graph $E$ satisfies $p(E)=\left\lceil \frac{d+1}{2}\right\rceil$, where $d$ is the diameter of $E$.
\end{theorem}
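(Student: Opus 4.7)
The plan is to establish the two inequalities $p(E)\le\lceil(d+1)/2\rceil$ and $p(E)\ge\lceil(d+1)/2\rceil$ separately.

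For the upper bound, I would construct an explicit distribution of size $\lceil(d+1)/2\rceil$ that pegs $E$. Write the spine of $E$ as $v_1,\dots,v_s$ with $s=d-1$, so that the diameter path of $E$ consists of a leaf attached to $v_1$, the entire spine, and a leaf attached to $v_s$. I would place pegs on the spine in pairs of consecutive vertices following the pattern $(v_{4k+1},v_{4k+2})$ for $k=0,1,2,\dots$, and then add one or two additional pegs near the end of the spine (choosing their positions according to $d\bmod 4$) so that the final spine vertices remain reachable; the total count matches $\lceil(d+1)/2\rceil$. To verify that this distribution pegs all of $E$, I would argue that (i) every spine vertex $v_j$ is reachable, because each pair of adjacent pegged spine vertices can push inward and consecutive pairs can relay pegs across the gaps, and (ii) every hair leaf $h$ attached to $v_j$ is reachable by first engineering a configuration with pegs simultaneously at $v_j$ and at one of its spine neighbours and then performing the final move across $v_j$ onto $h$.

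For the lower bound, I plan to show $|D|\ge\lceil(d+1)/2\rceil$ for any distribution $D$ that pegs $E$. The underlying fact is that the optimal-pegging number of a path on $n\ge 3$ vertices is $\lceil n/2\rceil$, which can be cited from Helleloid, Khalid, Matchett-Wood, and Moulton~\cite{Moulton} (or verified by a short direct case analysis). Letting $P$ denote the diameter path of $E$, I would define a projection $\pi:V(E)\to V(P)$ that fixes $P$ pointwise and sends each hair leaf attached to a spine vertex $v_j$ to $v_j$; the image $\pi(D)$ is then a multi-distribution on $P$ of size $|D|$. Each pegging move on $E$ descends under $\pi$ to a peggling move (stacking or pebbling) on $P$, so every vertex of $P$ is peggling-reachable from $\pi(D)$. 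The Peggling Lemma (Lemma~\ref{pegglingLem}) then allows me to extract a proper pegging distribution on $P$ of size at most $|D|$ realising the same reach, from which the path formula yields $|D|\ge\lceil(d+1)/2\rceil$.

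The main obstacle is in the lower bound, specifically at the step of passing from the multi-distribution $\pi(D)$ to a proper pegging distribution on $P$ of the same size: when two or more hair leaves attached to a single spine vertex are pegged, $\pi$ creates a stack at that vertex, and one must ensure that these stacks do not give $\pi(D)$ more reach than some proper distribution of the same size. I would attempt to resolve this either by a direct argument that uses Lemmas~\ref{monoWeight} and~\ref{monoReach} to spread the stacks outward along $P$ without loss of reach, or by proving the auxiliary inequality $|R(D')|\le 2|D'|$ for any (multi-)distribution $D'$ on a path, which forces $|D|\ge(d+1)/2$ as soon as $D$ reaches all of $P$.
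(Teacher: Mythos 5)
Your overall strategy --- reduce both bounds to the diameter path $P_{d+1}$ --- is the same as the paper's, and your upper bound is essentially sound: the ``two on, two off'' pattern is an optimal pegging distribution for $P_{d+1}$, and your claim (ii), that every hair leaf becomes reachable once the whole diameter path is reachable, is precisely the paper's Lemma \ref{caterpillarLem1}. That claim does require an argument (you must manufacture pegs simultaneously on $v_j$ and on a spine neighbour of $v_j$; the paper does this by a case analysis on ancestors), but for your explicit pattern it can be checked directly, so here you only need to write the verification out.

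The lower bound has a genuine gap, and it sits exactly where you flag it. The Peggling Lemma (Lemma \ref{pegglingLem}) takes a \emph{proper} distribution as its input and says that allowing stacking and pebbling moves does not enlarge its reach; it does not say that a multi-distribution of total size $s$ --- which is what your projection $\pi(D)$ is whenever two hairs at the same spine vertex, or a hair and its spine vertex, both carry pegs --- is dominated by some proper distribution of size $s$. So you cannot conclude $|D|\ge p(P_{d+1})$ from the projection as written. Your fallback inequality $|R(D')|\le 2|D'|$ for multi-distributions on a path is also not free: the single-target weight bound only gives $w_{V(P)}(v)\le 1+2\omega/(1-\omega)\approx 4.24$ per peg, so it yields a constant near $4$ rather than $2$, and the sharp path bound of \cite{Moulton} (Lemma \ref{pathLem}) is proved for proper distributions. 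The paper closes this hole with Lemma \ref{caterpillarLem2}: it iteratively relocates pegs off the hair leaves onto spine vertices --- in the easy cases by a stacking/pebbling simulation, and in the hard case by replacing the pegs on $\{l,v\}$ with pegs on the two nearest unpegged non-leaf vertices $u$ and $w$ on either side of $v$, using Lemma \ref{Melanie} to show the reach is preserved --- all without increasing $|D|$. Once no leaf carries a peg, the distribution lies on the spine, it must peg all of $P_{d+1}$ by moves confined to that path, and Lemma \ref{pathLem} gives $|D|\ge\lceil (d+1)/2\rceil$. That relocation lemma is the rigorous version of your ``spread the stacks outward along $P$'' idea, and it is the ingredient you would need to supply.
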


First we need four lemmas that will be used in the proof of this theorem.

\begin{lemma}\label{adjLem}
	Let $P_n$ be a path on $n$ vertices and let $D$ be a distribution of pegs on $P_n$. 
	Then for all $v\in R(D)$, $v\in D$ or $v$ is adjacent to some vertex in $D$.
\end{lemma}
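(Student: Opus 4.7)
My approach is to argue by contradiction using the weight framework of Theorem \ref{reachThm}. Assume for contradiction that some $v \in R(D)$ is neither in $D$ nor adjacent to any vertex of $D$; I will isolate a sub-distribution $A \subseteq D$ from which $v$ is still reachable yet $w_v(A) < 1$, violating Theorem \ref{reachThm}.

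The central geometric observation is that on a path, moves toward $v$ cannot cross $v$. Label $P_n$ as $1, 2, \dots, n$ and set $v = k$. By Lemma \ref{Melanie}, there is a sequence $M = (m_1, \dots, m_\ell)$ of pegging moves toward $v$ with $v \in M(D)$; take $M$ of minimum length, so that $v$ belongs to no intermediate distribution $M_i(D)$ for $i < \ell$. Each pegging move on a path has the form $\peg{p}{p+1}{p+2}$ or its mirror, and requires a peg at the middle vertex $p+1$. Since the middle vertex is never $v$, all three positions involved in every move lie strictly on one side of $v$. A straightforward induction on $i$ then shows that every peg present in $M_i(D)$ at a position $p$ has all of its $\anc$ ancestors on the same side of $v$ as $p$.

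The final move $m_\ell$ places $v$, so it equals $\peg{v-2}{v-1}{v}$ or $\peg{v+2}{v+1}{v}$; WLOG take the former. By the side-preservation claim (and since $v-1 \notin D$ by hypothesis), the ancestor set $A := \anc(D, M, q)$ of the peg $q$ produced at $v$ is contained in $D_L := D \cap \{1, \dots, v-2\}$. Restricting $M$ to the sub-sequence of moves acting only on ancestors of $q$, one reaches $v$ starting from $A$ alone, so $v \in R(A)$ and Theorem \ref{reachThm} yields $w_v(A) \ge 1$. On the other hand, $D_L$ contains at most one vertex at each of the distances $2, 3, \dots, v-1$ from $v$, giving
\[
w_v(A) \;\le\; w_v(D_L) \;\le\; \sum_{i=2}^{v-1} \omega^i \;<\; \sum_{i=2}^{\infty} \omega^i \;=\; \frac{\omega^2}{1 - \omega} \;=\; 1,
\]
where the last equality uses the identity $\omega^2 + \omega = 1$. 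This contradicts $w_v(A) \ge 1$, completing the proof.

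The delicate technical point is the side-preservation induction in paragraph two: one must verify that a minimal toward-$v$ sequence cannot contain a move with middle vertex $v$, which would otherwise let ancestors bleed in from the opposite side and essentially double the right-hand bound above (the naive estimate $w_v(D) \le 2 \sum_{i \ge 2} \omega^i = 2$ is not strong enough). The use of Lemma \ref{Melanie} plus minimality is what closes off this possibility and makes the one-sided weight bound $w_v(D_L) < 1$ available.
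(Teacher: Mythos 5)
Your proof is correct and follows essentially the same route as the paper: reduce to the pegs on one side of $v$ and show their weight with respect to $v$ is strictly less than $\sum_{i\ge 2}\omega^i = \omega^2/(1-\omega)=1$, contradicting Theorem \ref{reachThm}. The only difference is that you carefully justify the one-sidedness via Lemma \ref{Melanie} and the ancestor machinery, whereas the paper simply asserts that $v$ "must be reachable from the distribution of pegs on one side of it."
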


\begin{proof}
	Suppose that this claim is false, and that there exists a vertex $v \in R(D)$ that is a distance of at least two from any vertex in $D$. 
	If $v\in R(D)$, then it must be reachable from the distribution of pegs on one side of it.  
	Let $D'\subseteq D$ be the distribution of pegs on one side of $v$. 
	We can get an upper bound for the weight of $D'$:
	
\begin{equation*}
w_v(D')
\leq \omega^2+\omega^3+\cdots+\omega^{n-1}
= \omega^2\left(\frac{1-\omega^{n-2}}{1-\omega}\right)
< \frac{\omega^2}{1-\omega}
= 1.
\end{equation*}

	Therefore $v\notin R(D')$, and so $v\notin R(D)$.
\end{proof} 

\begin{lemma}\label{caterpillarLem1}
	Let $D$ be a distribution of pegs on some caterpillar $E$ with diameter $d\geq2$, and let $P$ be a longest path of $E$. 
	If $P\subseteq R(D)$, then $R(D)=V(E)$.
\end{lemma}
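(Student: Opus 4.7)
The plan is to fix an arbitrary $v \in V(E) \setminus P$ and show $v \in R(D)$. Since $P = p_0 p_1 \cdots p_d$ is a longest path, both $p_0$ and $p_d$ must be leaves of $E$, and every vertex outside $P$ is a leaf attached to some interior spine vertex $p_i$ with $1 \leq i \leq d-1$. The case $v \in D$ is immediate, so I assume $v \notin D$. A preliminary observation is that no move toward any of $p_{i-1}, p_i, p_{i+1}$ can land on $v$: such a move would have to be of the form $\peg{x}{p_i}{v}$, but every neighbor $x$ of $p_i$ satisfies $d(x,t) \leq d(v,t) = 2$ for each of these three targets $t$, violating the strict distance-decrease imposed by Lemma \ref{Melanie}. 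Hence $v$ remains unpegged throughout any sequence of moves directed at $p_{i-1}$, $p_i$, or $p_{i+1}$.

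Next, since $p_{i+1} \in R(D)$, Lemma \ref{Melanie} provides a sequence $M$ of moves all toward $p_{i+1}$ that reaches it; I would truncate $M$ at the first move landing a peg on $p_{i+1}$ and write this move as $\peg{a}{b}{p_{i+1}}$. Since $d(a,p_{i+1})=2$ and legs of $p_{i+1}$ are ruled out as choices for $b$ (a leg would force $a = p_{i+1}$), we have $b \in \{p_i, p_{i+2}\}$. If $b = p_i$, the configuration immediately before this move has pegs at $p_i$ and $a$; since $v$ is empty and $a \neq v$, replacing the last move with $\peg{a}{p_i}{v}$ places a peg on $v$. A symmetric argument using the reach of $p_{i-1}$ handles the case in which some reach of $p_{i-1}$ ends with $\peg{a'}{p_i}{p_{i-1}}$.

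The main obstacle is the remaining doubly-outward case, in which every reach of $p_{i+1}$ ends from the $p_{i+2}$ side and every reach of $p_{i-1}$ ends from the $p_{i-2}$ side. Here I would argue that no move in either reach can push a peg across $p_i$: the only candidate move toward $p_{i+1}$ with middle vertex $p_i$ would land on $p_{i+1}$, which by truncation and the outward hypothesis cannot occur, and similarly on the left. Consequently the ancestor sets of the two reaches lie in the two disjoint subtrees of $E \setminus \{p_i\}$ (invoking Lemma \ref{ancestorLem}), so the two sequences can be executed consecutively without interference while leaving $p_i$ and its legs untouched. Combining this with a reach of $p_i$ (which exists by hypothesis) drawn from the remaining pegs should yield a configuration in which $p_i$ and at least one of $p_{i-1}, p_{i+1}$ are simultaneously pegged, so the finishing move $\peg{p_{i\pm 1}}{p_i}{v}$ reaches $v$. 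Verifying that these three reaches always admit a compatible, non-interfering ordering is the key technical obstacle I anticipate.
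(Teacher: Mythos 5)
Your redirection argument for the cases in which a truncated reach of $p_{i+1}$ or $p_{i-1}$ ends by jumping over $p_i$ is sound, but the doubly-outward case --- which you yourself flag as unresolved --- is a genuine gap, and the plan you sketch for it does not go through as stated. The difficulty is the order of operations: after you execute the two outward reaches of $p_{i-1}$ and $p_{i+1}$ and consume their ancestor pegs, nothing guarantees that a reach of $p_i$ exists ``from the remaining pegs.'' The hypothesis only gives $p_i\in R(D)$, i.e.\ reachability from the \emph{full} distribution, and the pegs needed to reach $p_i$ may be exactly those you just spent on its neighbours (e.g.\ the only route to $p_i$ from the right may be $\peg{a}{p_{i+1}}{p_i}$, whose penultimate configuration is destroyed once you have finished ``reaching $p_{i+1}$''). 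Two smaller issues: your case analysis silently assumes each reach has a nontrivial last move, so the cases $p_{i\pm1}\in D$ and $p_i\in D$ need separate (easy) treatment; and your disjointness claim needs a pruning step, because a move toward $p_{i+1}$ \emph{can} carry a left-side peg onto $p_i$ (for instance $\peg{p_{i-2}}{p_{i-1}}{p_i}$ strictly decreases distance to $p_{i+1}$), so the given sequence is not automatically confined to one component of $E-p_i$; one must first restrict to moves whose pegs are ancestors of the final peg, and the disjointness then comes from $p_i$ being a cut vertex rather than from Lemma \ref{ancestorLem}, which compares two pegs within a single sequence.

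The repair is to privilege $p_i$ rather than its neighbours. Assuming $v,p_i\notin D$, take a toward-$p_i$ reach of $p_i$: no toward-$p_i$ move can jump over $p_i$, and none lands on it before the final move $\peg{a}{p_{i+1}}{p_i}$ (say), so its contributing pegs all lie in the component of $E-p_i$ containing $p_{i+1}$, and just before that final move you hold pegs on $a$ and $p_{i+1}$. Now take a toward-$p_{i-1}$ reach of $p_{i-1}$: either its contributing pegs lie entirely in the other component, in which case the two sequences do not interfere and you end with pegs on $p_i$ and $p_{i-1}$ simultaneously, ready for $\peg{p_{i-1}}{p_i}{v}$; or some peg must cross $p_i$, which forces a configuration with pegs on $p_i$ and a neighbour of $p_i$ at once, and you redirect exactly as in your easy case. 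For comparison, the paper organizes the same dichotomy around ancestor relations among $p_{i-1}$, $p_i$, $p_{i+1}$ (whether $p_i$ is an ancestor of a neighbour's reach, whether a neighbour is an ancestor of another's, or neither) and settles the non-interfering case with Lemma \ref{ancestorLem}; your landing-direction decomposition can be made to work, but only after the interference question above is resolved.
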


\begin{proof}
	By definition of a caterpillar, a vertex that does not lie in $P$ must be a leaf. 
	Let $l$ be such a leaf, and let $v\in P$ be adjacent to $l$. 
	Assume for the sake of contradiction that $l\notin R(D)$. 
	Since $v$ is not a leaf there exists two vertices $u$ and $w$ adjacent to $v$ that are in $P$, and are therefore in the reach of $D$. 
	Suppose there exists a sequence of moves $M=(m_1,\dots,m_k)$ where $v\in\anc(D,M,u)$. 
	Then there must be some $x\in R(D)$ (possibly $w$) adjacent to $v$ such that either $x\in D$ or $v,x\in M_i(D)$, for some $1\leq i\leq k$. 
	Either way, we can reach $l$ via the pegging move $m=\peg{x}{v}{l}$. 
	The same argument holds if there exists a sequence of moves $M$ where $v\in\anc(D,M,w)$. 
	Now suppose this is not the case, and that no such sequence of moves exists where $v$ is an ancestor of $u$ or $w$. 
	Let us consider the case where there exists a sequence of moves $M=(m_1,\dots,m_k)$ where $u\in\anc(D,M,v)$. 
	If there is some finite sequence of pegging moves $M'=(m'_1,\dots,m'_n)$ such that $u\in\anc(D,M',w)$, since $v$ lies in the path between $u$ and $w$, and $v$ is adjacent to $w$, then $m'_i=\peg{x}{v}{w}$ for some $1\leq i\leq n$ and $x\in V(E)$. 
	So we can construct a sequence of pegging moves $M''=(m''_1,\dots,m''_n)$ where $m''_j = \peg{x}{v}{l}$ if $j=i$ and $m''_j=m'_j$ otherwise, and we can reach $l$. 
	If such a sequence $M'$ does not exist, then by Lemma \ref{ancestorLem}, there exists another sequence $M''=(m''_1,\dots,m''_n)$ such that $\anc(D,M,v)$ and $\anc(D,M'',w)$ are disjoint. 
	Let $\left<M,M''\right>=(m_1,\dots,m_k,m''_1,\dots,m''_n)$. 
	Then $v,w\in\left<M,M''\right>(D)$, and so we can reach $l$ via the pegging move $m=\peg{w}{v}{l}$. 
	If $u$ is not an ancestor of $v$ and vice versa, we can reach $l$ via the pegging move $\peg{u}{v}{l}$. 
	So $l\in R(D)$.
\end{proof}
\begin{lemma}\label{caterpillarLem2}
	Let $E$ be a caterpillar with  diameter $d\geq 3$, and let $D$ be a distribution of pegs on $E$ such that $R(D)=V(E)$. 
	Denote the set of leaves in $D$ as $L_D$. 
	If $|L_D|\geq1$, then there exists a distribution $D'$ such that $|D'|\leq|D|$, $R(D')= R(D)$, and $|L_{D'}|<|L_D|$.
\end{lemma}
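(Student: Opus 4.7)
The plan is to pick a leaf $\ell\in L_D$ with unique neighbor $v$ and to modify $D$ locally around $\ell$ to build $D'$. The construction splits into two principal cases according to whether $v\in D$.

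\textbf{Case 1: $v\notin D$.} Set $D'=(D\setminus\{\ell\})\cup\{v\}$. Then $|D'|=|D|$, and since $d\geq 3$ forces the degree of $v$ to be at least $2$, $v$ is not a leaf, so $|L_{D'}|=|L_D|-1$. To show $R(D')=R(D)$, I would use a simulation argument: given any $t\in R(D)$ with witnessing sequence $M$ from $D$, construct a sequence $M'$ from $D'$ reaching $t$. Because $\ell$ is a leaf, the only way $\ell$ can appear in a move of $M$ is as the jumper of some $\peg{\ell}{v}{w}$, which requires a prior move of $M$ to have placed a peg at $v$; call that prior move $\peg{a}{b}{v}$. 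In $M'$, the prior move is skipped (since $v$ is already present in $D'$) and the later $\peg{\ell}{v}{w}$ is replaced by $\peg{b}{v}{w}$, using the peg $b$ adjacent to $v$ that would have been consumed by the skipped move. Induction on the number of moves then maintains the invariant that the $M'$-multi-distribution contains the $M$-distribution; Lemmas \ref{monoReach} and \ref{pegglingLem} allow conversion back to a proper distribution, and $t\in R(D')$ follows.

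\textbf{Case 2: $v\in D$.} If some $w\neq\ell$ adjacent to $v$ satisfies $w\notin D$ (I would choose $w$ non-leaf whenever possible, e.g., the spine neighbor of $v$ opposite $\ell$), set $D'=(D\setminus\{\ell\})\cup\{w\}$. The analysis mirrors Case 1 with $w$ in place of $v$: any $\peg{\ell}{v}{w}$ in an $M$-sequence is now absorbed because $w$ is already present, while the adjacent pegs at $v$ and $w$ in $D'$ permit the move $\peg{w}{v}{\ell}$ if $\ell$ itself is ever needed as a target. If instead every neighbor of $v$ lies in $D$, set $D'=D\setminus\{\ell\}$; the peg at $\ell$ is then redundant, since any use of $\ell$ via $\peg{\ell}{v}{w}$ requires $w$ to have been vacated by an earlier move, and after a suitable re-ordering the peg that vacated $w$ (or another neighbor of $v$ still present at that moment) can serve as the jumper in place of $\ell$.

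The hardest part I anticipate is this last ``saturated'' subcase: replacing $\ell$ by an existing peg at the critical moment demands a careful exchange argument on sequences of moves, and the argument must also account for targets $t=\ell$ or $t=v$ themselves. To simplify the bookkeeping I would invoke Lemma \ref{Melanie} and work only with sequences whose moves monotonically approach the target. As a fallback, if the exchange proves too delicate, I would try to show that whenever $|L_D|\geq 1$ and $R(D)=V(E)$, at least one leaf of $L_D$ falls into Case 1 or the first part of Case 2, so the saturated subcase need not be handled at all.
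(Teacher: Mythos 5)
Your Case~1 is essentially the paper's: the same $D'=(D\setminus\{\ell\})\cup\{v\}$, with the observation that the peg on $\ell$ is inert until $v$ is occupied, so it can be pre-placed on $v$ and the stacking/peggling lemmas recover everything (including $\ell$ itself, via a doubled peg on $v$ and one pebbling move). The genuine problems are in Case~2. First, your branch ``some neighbor $w$ of $v$ is unpegged'' does not deliver the conclusion when every unpegged neighbor of $v$ is a leaf: then the only available $w$ is itself a leaf, so $|L_{D'}|=|L_D|$ and the leaf count does not drop; this situation is not covered by your other branch (``every neighbor of $v$ lies in $D$''), so the case analysis has a hole. Second, the saturated subcase is exactly where you stop short: the exchange argument is only sketched, and the proposed fallback is false. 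Take the path $v_0v_1v_2v_3v_4$ with one extra leaf $\ell$ attached to $v_2$ and $D=\{v_1,v_2,v_3,\ell\}$. Then $R(D)=V(E)$, $L_D=\{\ell\}$, and $v_2$ together with all of its neighbors is pegged, so the unique leaf of $D$ is saturated and none of your easier cases applies. The saturated configuration therefore cannot be avoided, and as it stands your proof of it is a plan rather than an argument.

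The paper escapes both difficulties by abandoning the purely local viewpoint in Case~2 and reducing to Lemma~\ref{caterpillarLem1} (``if a longest path is reachable, everything is''). It splits on whether some longest path $P$ is entirely contained in $D$: if so, $D\setminus\{l\}$ still reaches all of $P$ (recovering $l$, when $l\in P$, by jumping a neighboring peg of $P$ over $v$ onto $l$), and Lemma~\ref{caterpillarLem1} finishes. If not, it swaps the pair $\{l,v\}$ for the two nearest unpegged non-leaf vertices $u,w$ on opposite sides of $v$, and uses Lemma~\ref{Melanie} to argue that any route from $l$ to an unpegged vertex of $P$ must pass through $u$ or $w$ and has already consumed the pegs on $l$ and $v$ by that point, so the swapped distribution still reaches all of $P$ and Lemma~\ref{caterpillarLem1} applies again. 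This reduction to pegging a single longest path is the idea your proposal is missing; without it, or without actually carrying out the delicate move-exchange for the saturated case, the proof does not go through.
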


\begin{proof}Let $l$ be a leaf in $D$ and $v$ be the vertex adjacent to $l$. There are three possible configurations of $D$:

	\textit{Case 1:} $v\notin D$. 
	Since $l$ is not adjacent to any other pegged vertices, it cannot be used to reach $v$. 
	So $v\in R(D\backslash\{l\})$. 
	If $D'=D\cup\{v\}\backslash\{l\}$, then after a series of stacking moves we can achieve 2 pegs on $v$ with which to perform a pebbling move to reach $l$. 
	But by the Peggling Lemma, this means that $\{v,l\} \subseteq R(D')$. 
	Furthermore, the set of vertices that can be reached by a pegging move using $l$ are all adjacent to $v$, so they can be reached with pebbling moves as well. 
	So $R(D')=R(D)$. 


	\textit{Case 2:} $v\in D$. 
	Suppose there exists a path $P\subseteq V(E)$ of length $d$ such that $P\subseteq D$. 
	If  $l\notin P$, then by Lemma \ref{caterpillarLem1}, $R(D\backslash\{l\})= R(D)$. 
	Suppose $l\in P$. 
	Since $d\geq 2$, there is a vertex $u\in P$ that is distinct from $l$ and adjacent to $v$, and we can perform the stacking move $m=\peg{u}{v}{l}$. 
	So $P\subseteq R(P\backslash\{l\})$, and by Lemma \ref{caterpillarLem1}, $R(D\backslash\{l\})= R(D)$. 

	Suppose no such path exists. 
	Denote the two closest unpegged, non-leaf vertices on opposite sides of $v$ as $u$ and $w$ (in other words, $v$ lies on the path between $u$ and $w$, and there is no unpegged, non-leaf vertex $v'$ such that $v'$ lies on the path between $v$ and $u$ or $v$ and $w$). Let $P$ denote the longest path of $E$ containing $u$, $v$, and $w$. 
	By the definitions of $u$ and $w$, any other unpegged vertex $x$ in $P$ is closer to either $u$ or $w$ than it is to $v$. 
	Suppose $x$ is closer to $u$. By Lemma \ref{Melanie}, if $l$ is used to reach $x$, we move the peg on $l$ toward $x$, and therefore toward $u$. 
	So if $M=(m_1,\dots,m_k)$ is the sequence of moves on $D$ used to reach $x$ by first moving the peg on $l$, and if $i$ is the smallest index such that $M_i$ contains $u$, then $x\in R(M_i(D))$. 
	Since the pegs on $l$ and $v$ are removed from $E$ in the first $i$ moves of $M$, $l,v\notin M_i(D)$. 
	Let $D'=D\cup\{u,w\} \backslash\{l,v\}$. 
	Then $M_i(D)\subseteq D'$, and so $x\in R(D')$. 
	We can follow a similar argument if $x$ is closer to $w$. 
	So $P\subseteq R(D')$, and by Lemma \ref{caterpillarLem1}, $R(D')=V(E)$.
\end{proof}


The following lemma and its proof were presented in Helleloid, et al. \cite{Moulton}.

\begin{lemma}\label{pathLem} 
For $n\geq 3$, the path $P_n$ satisfies $p(P_n)=\lceil n/2\rceil$.
\end{lemma}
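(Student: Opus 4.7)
The plan is to prove matching upper and lower bounds.

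For the upper bound $p(P_n)\le\lceil n/2\rceil$, I will exhibit an explicit distribution of the claimed size. Label the vertices $v_1,\dots,v_n$ along the path, partition them into consecutive blocks of four, and place pegs on the middle two vertices $v_{4k+2}$ and $v_{4k+3}$ of each block (for each $k\ge 0$ with $4k+3\le n$). The pair in block $k$ reaches all four vertices of its block via the moves $\peg{v_{4k+3}}{v_{4k+2}}{v_{4k+1}}$ and $\peg{v_{4k+2}}{v_{4k+3}}{v_{4k+4}}$. I will then handle the residue $n\bmod 4$ at the right end by adding an extra peg (for $n\equiv 1,2\pmod 4$) or an extra pair of pegs (for $n\equiv 3\pmod 4$); a short verification in each residue class confirms the total is exactly $\lceil n/2\rceil$ and that every right-end vertex is reachable using the pegs of the last block.

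For the lower bound $p(P_n)\ge\lceil n/2\rceil$, suppose $D$ pegs $P_n$. My plan is to construct an injection $\phi\colon V(P_n)\setminus D\to D$ with $\phi(v)$ adjacent to $v$ in $P_n$. Such a $\phi$ yields $|V(P_n)\setminus D|\le|D|$, so $n\le 2|D|$ and hence $|D|\ge\lceil n/2\rceil$. Since $D$ pegs $P_n$, every $v\in V(P_n)\setminus D$ lies in $R(D)$, and by Lemma \ref{adjLem} it has at least one adjacent peg in $D$ eligible to be $\phi(v)$; by Hall's marriage theorem, such a $\phi$ exists if and only if $|N_{P_n}(S)\cap D|\ge|S|$ for every $S\subseteq V(P_n)\setminus D$.

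Verifying Hall's condition is the main obstacle, and it is where the pegging hypothesis plays the crucial role. Applying Lemma \ref{adjLem} at the leaves $v_1$ and $v_n$ forces any gap of $D$ touching the boundary to have size $1$, and applying it to interior vertices in a larger gap forces every maximal gap of $D$ to have size at most $2$. A further Catch-22 argument rules out the configuration in which two size-$2$ gaps are separated by only one peg: to reach either inner vertex in such a pattern would require both neighbors of the separating peg to be simultaneously pegged, an impossibility for such a sparse $D$. Under these structural restrictions, I construct $\phi$ by a left-to-right greedy assignment: each vertex of a size-$2$ gap is matched to its same-side flanking peg, and each isolated unpegged vertex is matched to a flanking peg not already claimed. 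A case check confirms no peg is claimed twice, so the matching exists and the lower bound follows.
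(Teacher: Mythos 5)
Note first that the paper itself gives no proof of this lemma --- it is quoted from Helleloid et al.\ \cite{Moulton} --- so your argument has to stand entirely on its own. Your upper bound is fine: pegs on the two middle vertices of each block of four, plus one or two extra pegs for the leftover $n\bmod 4$ vertices, total $2\lfloor n/4\rfloor+\lceil (n\bmod 4)/2\rceil=\lceil n/2\rceil$ and each block reaches its own four vertices. (Just reconcile your indexing condition $4k+3\le n$ with the separate treatment of the residue when $n\equiv 3\pmod 4$, or you count the last pair twice.)

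The lower bound, however, has a genuine gap: the three structural restrictions you derive (every maximal gap of $D$ has size at most $2$; boundary gaps have size $1$; no two size-$2$ gaps separated by a single peg) do \emph{not} imply Hall's condition, and your left-to-right greedy fails on configurations they permit. Concretely, take the local pattern in which $v_1,v_4,v_6,v_9\in D$ and $v_2,v_3,v_5,v_7,v_8\notin D$ (with everything outside this window pegged, say). All gaps have size at most $2$, the boundary is fine, and the two size-$2$ gaps are separated by \emph{two} pegs with a size-$1$ gap between them, so none of your restrictions is violated. Yet the five unpegged vertices $\{v_2,v_3,v_5,v_7,v_8\}$ have only the four pegged neighbours $\{v_1,v_4,v_6,v_9\}$, so Hall fails; correspondingly your greedy sends $v_2\mapsto v_1$, $v_3\mapsto v_4$, $v_5\mapsto v_6$, and then has nowhere to send $v_7$. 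This configuration (and the longer family in which two size-$2$ gaps are separated by an alternation of single pegs and size-$1$ gaps) is in fact impossible for a distribution that pegs $P_n$, but showing that requires a further weight computation rather than your stated restrictions: $v_3$ cannot be reached from the left because its nearest left peg is at distance $2$, giving weight at most $\omega^2+\omega^3+\cdots=\omega^2/(1-\omega)=1$ with strict inequality on a finite path, and it cannot be reached from the right because the right-hand pegs sit at distances $1,3,6,7,\ldots$, giving weight at most $\omega+\omega^3+\omega^6/(1-\omega)=\omega+\omega^3+\omega^4=1$, again strict on a finite path; since by Lemma \ref{Melanie} and Lemma \ref{ancestorLem} no peg crosses $v_3$, the two sides cannot cooperate, so $v_3\notin R(D)$. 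In short, the class of forbidden configurations is strictly larger than the single one your ``Catch-22'' step excludes, the relevant weights all sit exactly at the threshold $1$, and the sentence ``a case check confirms no peg is claimed twice'' is not justified as written. To repair the argument you would need to show that \emph{every} minimal Hall violator forces some unpegged vertex to have one-sided weight strictly below $1$ from both sides, or replace the matching argument with a direct induction along the path.
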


\begin{proof}[Proof of Theorem \ref{caterpillarThm}]
	By Lemmas \ref{caterpillarLem1} and \ref{pathLem}, we see that $p(E)\leq \left\lceil \frac{d+1}{2}\right\rceil$ and by Lemma \ref{caterpillarLem2}, it is clear that $p(E)\leq p(P_{d+1})$. 
	So $p(E)= \left\lceil \frac{d+1}{2}\right\rceil$.
\end{proof}

	The optimal-pegging numbers for lobsters are not as straightforward. 
	A caterpillar is a type of lobster, so some lobsters with diameters $d$ have optimal-pegging numbers as low as $\left\lceil \frac{d+1}{2}\right\rceil$. 
	However, this is not the case for all lobsters. 
	A counterexample is a lobster $L$ such that if all its leaves and their incident edges are removed, the remainder of the graph is a star with at least 4 leaves. 
	The diameter of this graph is 4, and so for the sake of contradiction, suppose we only need a distribution $D$, of 3 pegs to reach all the vertices in $L$. 
	By the Pigeonhole Principle, there is at least one leaf $l\notin D$ such that $d(l,v)\geq 2$ where $v$ is the closest vertex to $l$ in $D$, and no other vertex in $D$ is as close to $l$ as $v$ is. 
	The closest we can get a peg to $l$ after one move is a distance 1 away from $l$. 
	However, since the third peg is a distance of at least 3 away from $l$, the two pegs remaining on the graph are not adjacent, and so no more pegging moves can be performed. 
	Therefore, 3 pegs are not sufficient to peg $L$.

\begin{figure}[h]
	\centering
		\includegraphics[scale=0.3]{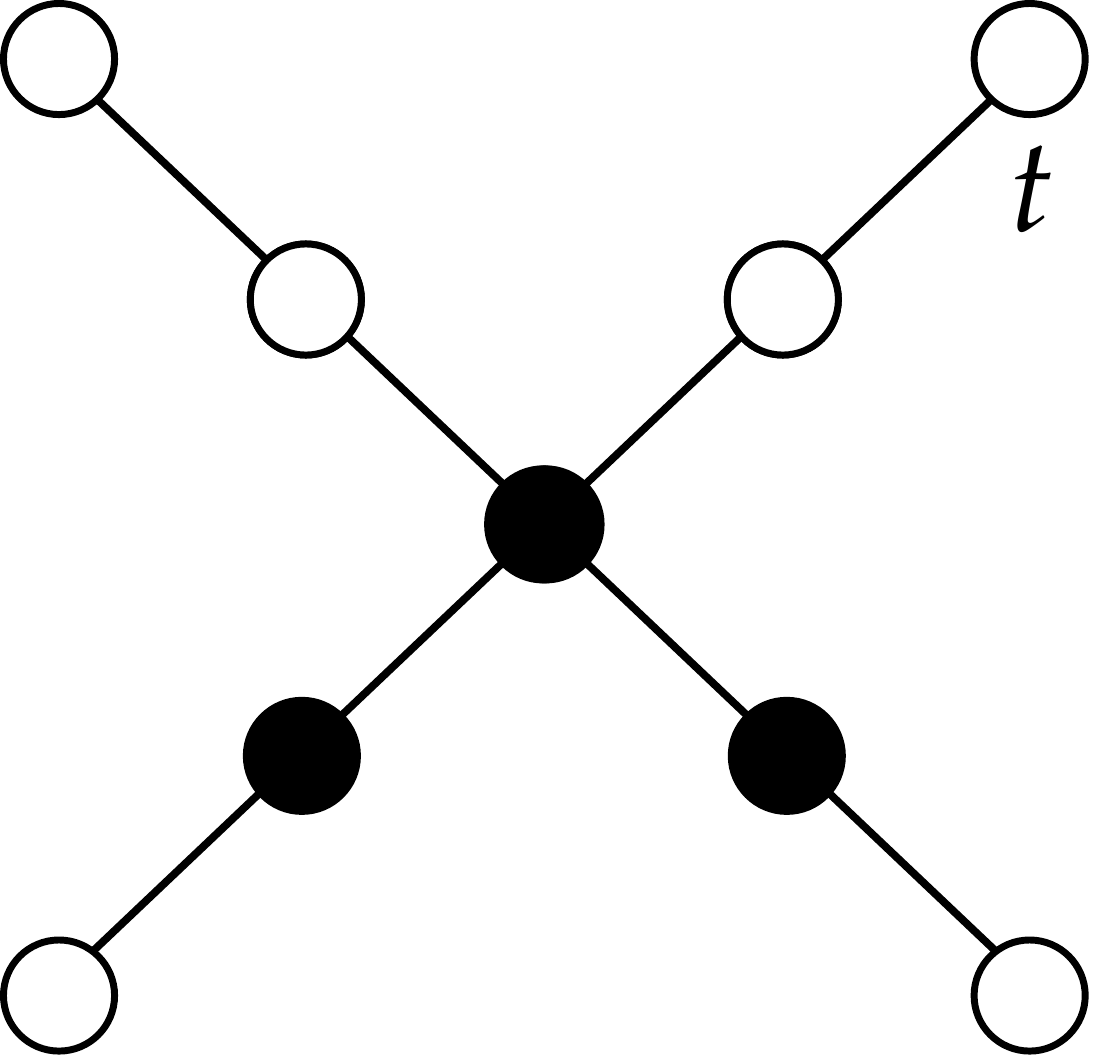}
	\caption{An example of a lobster $L$ where $p(L)>3$. This graph shows a distribution of three pegs such that $t$ cannot be reached.}
\end{figure}
	For this reason, we present only an upper bound for the optimal-pegging number of a lobster.

\begin{theorem}\label{lobsterThm}
For all lobsters $L$ with diameter $d\geq 5$, we have $p(L)\leq d-1$.
\end{theorem}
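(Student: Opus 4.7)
The plan is to exhibit an explicit distribution of $d-1$ pegs that reaches every vertex of $L$. Let $v_0, v_1, \ldots, v_d$ be a longest path in $L$, and set $D = \{v_1, v_2, \ldots, v_{d-1}\}$, so $|D| = d-1$. I will argue $R(D) = V(L)$ by classifying vertices and giving a reaching sequence for each class.

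A longest-path argument forces $v_0, v_d$ to be leaves of $L$ and $v_1, v_{d-1}$ to be leaves of the underlying caterpillar $C = L - \{\text{leaves of }L\}$; consequently the spine of $C$ is exactly the path $v_2, v_3, \ldots, v_{d-2}$. Every vertex of $L$ therefore lies within distance $2$ of this spine and falls into one of four classes: (i) a path vertex $v_j$; (ii) a leaf of $L$ attached to some $v_j$ with $1 \le j \le d-1$; (iii) a leg (caterpillar-leaf) $u$ attached to some spine vertex $v_i$; or (iv) a leaf of $L$ attached to a non-path leg $u$. Classes (i)--(iii) are disposed of by a single pegging move drawn from the pegged path segment: $v_0$ is reached by $v_2 \to v_1 \to v_0$ (and $v_d$ symmetrically), any leaf attached to $v_j$ is reached by jumping a pegged path-neighbor of $v_j$ over $v_j$ onto the leaf, and a non-path leg $u$ at $v_i$ is reached by $v_{i-1} \to v_i \to u$.

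The essential case is (iv). For a leaf $l$ of a non-path leg $u$ hanging off spine vertex $v_i$ with $3 \le i \le d-3$, the three-move sequence
\[
v_{i-1} \to v_i \to u,\qquad v_{i+2} \to v_{i+1} \to v_i,\qquad v_i \to u \to l
\]
does the job: the first move parks a peg on $u$ while vacating $v_{i-1}$ and $v_i$, the second restores a peg on $v_i$ using path pegs two steps out, and the third fires the final jump. The boundary values $i = 2$ and $i = d-2$ require asymmetric substitutes, for instance $v_1 \to v_2 \to u,\ v_4 \to v_3 \to v_2,\ v_2 \to u \to l$ at one end and its mirror at the other. The hypothesis $d \ge 5$ enters precisely here, to guarantee that $v_4, v_{d-4} \in D$ so these boundary sequences are legal.

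The principal obstacle is case (iv): one must track which pegs remain after each move to verify that after the first two moves both $v_i$ and $u$ are simultaneously pegged so that the final jump is licensed, and one must handle both boundary values of $i$ separately. The rest reduces to the structural observation that a lobster is a tree whose vertices all sit within distance $2$ of a central path, together with single-move applications of the definition of a pegging move.
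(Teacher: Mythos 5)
Your proposal is correct and follows essentially the same route as the paper: the same distribution (a longest path minus its two endpoints) and the same three-move sequence for the hard case --- park a peg on the leg $u$, refill $v_i$ from two steps farther along the path, then jump over $u$ onto the leaf --- with $d\geq 5$ used exactly where the paper uses it, to guarantee the refilling move exists near the ends of the path. Your explicit four-class decomposition and boundary analysis at $i=2$ and $i=d-2$ are just a more carefully spelled-out version of the paper's argument.
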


\begin{proof}
	Consider the distribution $D$ where all the pegs cover a longest path of $L$ with the exception of the two endpoints. 
	Clearly, if we can reach every leaf in $L$, then we can reach all the vertices in $L$. 
	Since $|D|=d-1\geq 2$, any vertex in, or adjacent to, some vertex in $D$ is reachable, so let us consider the case where the leaf in question is a distance of at least two away from any vertex in $D$. 
	Let $u,v,l\in V(L)$ where $l$ is the leaf, $v\in D$, and $u$ is adjacent to both $v$ and $l$. 
	Since $u,l\notin D$ we know that there are two vertices $x$ and $y$ adjacent to $v$ that are covered with pegs. 
	Furthermore, since $D\geq 4$, there must exist another vertex $z$ adjacent to either $x$ or $y$ in $D$. 
	Without loss of generality, assume that $z$ is adjacent to $y$. 
	Then we can reach $l$ via a series of pegging moves:
	
\begin{align*}
m_1 &= \peg{x}{v}{u}\\
m_2 &= \peg{z}{y}{v}\\
m_3 &= \peg{v}{u}{l}.
\end{align*}

	So $l\in R(D)$.
\end{proof}

\section{Future Research and Acknowledgments}\label{conclusion}


	Future research includes finding bounds on complete $n$-ary trees for $n\geq3$. 
	We can also study various types of caterpillars and lobsters in greater depth. 
	By classifying these graphs by properties such as the number of their leaves, their diameters, or the regularity of their constructions, one might be better able to compute their pegging numbers. 
	Another direction would be to find the probabilities that specific graphs can be pegged based on the size of their distributions with respect their number of the vertices.

	The research for this paper was conducted at the University of Minnesota Duluth and was made possible by the financial support of the National Security Agency (grant number DMS-0447070-001) and the National Science Foundation (grant number DMS-0447070-001). 
	I would like to extend special thanks to Ricky Liu, Reid Barton, and David Moulton for the ideas, suggestions, and intellectual support that they provided, and to Joe Gallian for proposing the topic of this paper and for running the REU at the University of Minnesota Duluth. 
	I would also like to thank David Arthur, Mike Develin, Stephen Hartke, Geir Helleloid, Philip Matchett-Wood, Joy Morris, Julie Savitt and Melanie Wood for their input and suggestions.


\end{document}